\newcounter{num}[section]
\newenvironment{theorem}
{\refstepcounter{num}%
\bigskip\noindent\nopagebreak[4]{\bf Theorem~\arabic{section}.\arabic{num}. }\it}
\newenvironment{proposition}
{\refstepcounter{num}%
\bigskip\noindent\nopagebreak[4]{\bf Proposition~\arabic{section}.\arabic{num}. }\it}
\newenvironment{corollary}
{\refstepcounter{num}%
\bigskip\noindent\nopagebreak[4]{\bf Corollary~\arabic{section}.\arabic{num}. }\it}
\newenvironment{lemma}
{\refstepcounter{num}%
\bigskip\noindent\nopagebreak[4]{\bf Lemma~\arabic{section}.\arabic{num}. }\it}
\newenvironment{example}
{\refstepcounter{num}%
\bigskip\noindent\nopagebreak[4]{\bf Example~\arabic{section}.\arabic{num}. }}
\newcommand{\LL}{{\mathcal{L}}}
\newcommand{\Ss}{{\mathbf{S}}}
\newcommand{\V}{{\mathrm{V}}}
\newcommand{\pr}{{\prime}}
\newcommand{\M}{{\mathcal{M}}}
\newcommand{\T}{{\mathcal{T}}}
\renewcommand{\P}{{\mathbf{P}}}
\renewcommand{\c}{{\mathbf{c}}}
\newcommand{\one}{{\mathbf{1}}}
\begin{document}

\author{Artem N. Shevlyakov}
\title{On disjunctions of equations over finite simple semigroups}

\maketitle

\abstract{A semigroup $S$ is called an equational domain if any finite union of algebraic sets over $S$ is algebraic. For a finite simple semigroup we find necessary and sufficient conditions to be an equational domain. Moreover, we study semigroups with nontrivial center and prove that any such semigroup is not an equational domain.}

\section{Introduction}

It is a well-known fact of commutative algebra that the union of two algebraic sets over a field $k$ (i.e. sets defined by system of polynomial equations) is algebraic. 

However, the definition of an equation in variables $X=\{x_1,x_2,\ldots,x_n\}$ can be defined over an arbitrary algebraic structure, not merely over a field. For example, an equation over a group $G$ is an expression $w(X)=1$, where $w(X)$ is an element of the free product $G\ast F(X)$ (\cite{AG_over_groupsI,AG_over_groupsII}), i.e. $w(X)$ is a product of integer degrees of variables and elements of the group $G$. An algebraic set over a group $G$ is defined as a solution set of a system of equations.

There exist groups, where any finite union of algebraic sets is algebraic. Following~\cite{uniTh_IV}, groups with such property are called {\it equational domains}, and in~\cite{uniTh_IV} these groups were completely described. 

It follows from~\cite{uniTh_IV} that the next groups are equational domains:
\begin{enumerate}
\item free non-abelian groups (this case was earlier proved by G.~Gurevich, see the proof in~\cite{makanin});
\item simple non-abelian groups (this case also follows from~\cite{rhodes}). 
\end{enumerate}

After the complete description of equational domains among groups it is natural to pose the similar problem for semigroups. Before to formulate this problem let us give some definitions of algebraic geometry over semigroups. 

All definitions below follow from the papers~\cite{uniTh_I,uniTh_II}, where  such notions were given for an arbitrary algebraic structure in a language with no predicates.

An equation over a semigroup $S$ is an equality $t(X)=s(X)$, where each part is a product consisting variables from the set $X$ and elements of the semigroup $S$. Using the definition of equation, one can naturally give the definitions of algebraic set and equational domain for a semigroup $S$.

\medskip 

{\bf Problem.} {\it Is there a nontrivial semigroup $S$ such that
\begin{enumerate}
\item $S$ is an equational domain;
\item $S$ is not a group.
\end{enumerate}}

Notice that the second condition of the problem above is essential, since one can find a group which is an equational domain as a semigroup. Indeed, take a finite group $G$ which is an equational domain (for example, $G=A_5$). By the choice of the group $G$, any finite union of algebraic sets $Y_1\cup Y_2\cup\ldots Y_n$ is a solution set of a system of equations $\Ss=\{w_i(X)=1|1\leq i\leq m\}$, where the group words $w_i(X)\in F(X)\ast G$ may contain variables in negative degrees (a constant in negative degree can be calculated and replaced to the another element of $G$). As the group $G$ is finite, one can replace all negative degrees to positive ones by the law: 
\[x^{-1}=x^{|G|-1}.\]  
Thus, $\Ss$ becomes a system of semigroup equations, hence the group $G$ is an equational domain as a semigroup.  

\bigskip

Let us explain the results of our paper. We study the class of finite simple groups, i.e. semigroups with no proper two-sided ideals. In Theorem~\ref{th:main} we give the necessary and sufficient conditions for a finite simple semigroup $S$ to be an equational domain. According Theorem~\ref{th:main}, it is easily defined a semigroup which solves the problem above positively (Example~\ref{ex:domain_240}). 

In Theorem~\ref{th:about_nonsimple_domains} we prove that the adjunction of the identity element to a finite simple semigroup preserves the property ``to be an equational domain''. Thus, Theorem~\ref{th:about_nonsimple_domains} solves the problem above in the class of non-simple semigroups.

Following~\cite{uniTh_IV}, any group with nontrivial center is not an equational domain. In Section~\ref{sec:center_semigroups} we prove the similar result for semigroups (Theorem~\ref{th:center_semigroups}).

\section{Basics: semigroups}

Let us give the general definitions of semigroup theory. For more details see~\cite{lyapin,skornyakov}. 

{\it A semigroup} is a nonempty set with associative binary operation $\cdot$ which is called a multiplication. A semigroup with a single element is called \textit{trivial}.

Elements $a,b\in S$ \textit{commute} if it holds $ab=ba$. A semigroup is commutative, if any pair of its elements commute. \textit{A center} of a semigroup $S$ consists of all elements $z\in S$ which commute with any element of $S$.    

An element $e$ ($0$) of a semigroup $S$ is an \textit{identity element} (\textit{zero}) if for any $s\in S$ we have $se=es=s$ ($s0=0s=0$). Clearly, the identity element and zero (if they exist) belong to the center of a semigroup.    

A subset $I\subseteq S$ is called a \textit{left (right) ideal} if for any $s\in S$, $a\in I$ it holds $sa\in I$ ($as\in I$). An ideal which is right and left simultaneously is said to be {\it two-sided} (or {\it an ideal} for shortness). For example, the set $\{0\}$ in a semigroup with zero is always ideal.  

A semigroup $S$ with a unique ideal $I=S$ is called \textit{simple}. Let us give the next theorem in the form proven in~\cite{skornyakov}.  

\begin{theorem}
\label{th:sushkevic_rees}
For any finite simple semigroup $S$ there exists a finite group $G$ and finite sets $I,\Lambda$ such that  $S$ is isomorphic to the set of triples $(\lambda,g,i)$, $g\in G$, $\lambda\in\Lambda$, $i\in I$. The multiplication over the triples $(\lambda,g,i)$ is defined by
\[
(\lambda,g,i)(\mu,h,j)=(\lambda,gp_{i\mu}h,j),
\]
where $p_{i\mu}\in G$ is an element of a matrix $\P$ such that
\begin{enumerate}
\item $\P$ consists of  $|I|$ rows and $|\Lambda|$ columns;
\item the elements of the first row and the first column equal  $1\in G$ (i.e. $\P$ is {\it normalized}).
\end{enumerate}
\end{theorem}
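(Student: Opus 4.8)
The plan is to reconstruct the classical Rees--Sushkevich coordinates, exploiting finiteness at every place where a general simple semigroup might fail to be completely simple. First I would produce an idempotent: since $S$ is finite, for each $a\in S$ the powers $a,a^2,\dots$ must repeat, so some power $a^n$ is idempotent; fix such an idempotent $e$. Simplicity forces $SaS=S$ for every $a\in S$, because $SaS$ is a two-sided ideal and the only ideal of $S$ is $S$ itself. From the local structure at $e$ I would extract the group $G$: the set $G:=eSe$ is closed and associative, has $e$ as a two-sided identity, and finiteness together with $SeS=S$ makes every element of $G$ two-sidedly invertible, so $G$ is a finite group with identity $e$.

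Next I would lay down coordinates using the ideal structure (equivalently Green's relations). Since $SaS=S$ for all $a$, the whole of $S$ is a single $\mathcal{J}$-class, and it decomposes into an ``egg-box'' of $\mathcal{H}$-classes $H_{\lambda i}=R_\lambda\cap L_i$, where the first coordinate ranges over the finitely many minimal right ideals, indexed by $\Lambda$, and the last coordinate over the minimal left ideals, indexed by $I$. Translation by fixed elements carries the group $\mathcal{H}$-class $G=H_e$ bijectively onto each $H_{\lambda i}$, so after choosing one representative per row and per column, every $s\in S$ acquires a unique address $(\lambda,g,i)$ with $\lambda\in\Lambda$, $g\in G$, $i\in I$, and the assignment $s\mapsto(\lambda,g,i)$ is a bijection onto the set of triples.

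It then remains to read off the multiplication. Multiplying a representative of $H_{\lambda i}$ by one of $H_{\mu j}$ lands in $H_{\lambda j}$, and the group coordinate picks up a factor $p_{i\mu}\in G$ recording the product of the two chosen representatives ``in the middle''. Collecting these factors into a matrix $\P$ with $|I|$ rows and $|\Lambda|$ columns gives exactly $(\lambda,g,i)(\mu,h,j)=(\lambda,gp_{i\mu}h,j)$. Finally I would normalize: rescaling the representatives along the first row and first column (multiplying on the appropriate side by suitable $p_{i\mu}^{-1}$) turns every entry of the first row and first column into $1\in G$ without changing the isomorphism type, so that $\P$ becomes normalized.

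I expect the coordinatization to be the real obstacle. The delicate points are verifying that the Green translations restrict to genuine \emph{bijections} between the $\mathcal{H}$-classes, and that a coherent system of representatives can be chosen so that the multiplication collapses to the clean sandwich form above; this is precisely the step where the completely simple structure supplied by finiteness is used. By contrast, producing the idempotent and normalizing $\P$ are routine, the latter amounting to a bookkeeping rescaling.
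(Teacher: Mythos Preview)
The paper does not actually prove this theorem: it is quoted as a classical result from~\cite{skornyakov} (``Let us give the next theorem in the form proven in~\cite{skornyakov}''), so there is no in-paper argument to compare against.

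That said, your sketch is the standard Rees--Sushkevich proof and is correct in outline. One step deserves a little more care: the claim that $G=eSe$ is a group does not follow from ``finiteness together with $SeS=S$'' alone in the way you phrase it. What is really being used is that a \emph{finite} simple semigroup is automatically completely simple, so every idempotent $e$ is primitive and $eSe$ coincides with the $\mathcal{H}$-class $H_e$, which is then a group. Equivalently, one first chooses $e$ inside a minimal left ideal (such ideals exist by finiteness and are subsemigroups, hence contain idempotents), checks that $Se$ is then that minimal left ideal and $eS$ a minimal right ideal, and concludes that $eSe=eS\cap Se$ is a group. Once you make this precise, the coordinatization via Green's relations and the normalization of $\P$ go through exactly as you describe.
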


Following Theorem~\ref{th:sushkevic_rees}, we denote any finite simple semigroup $S$ by $S=(G,\P,\Lambda,I)$. Notice that the cardinalities of the sets $\Lambda$, $I$ are equal respectively to the numbers of minimal right and left ideals of a semigroup $S$.

\begin{corollary}
\label{cor:when_is_group}
A finite simple semigroup $S=(G,\P,\Lambda,I)$ is a group iff $|\Lambda|=|I|=1$.
\end{corollary}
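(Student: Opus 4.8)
The plan is to exploit a structural invariant that singles out groups among semigroups: a group contains exactly one idempotent, namely its identity element. Indeed, if $x^2=x$ in a group, then multiplying by $x^{-1}$ gives $x=e$. I would therefore reduce the corollary to counting the idempotents of the Rees matrix semigroup $S=(G,\P,\Lambda,I)$ furnished by Theorem~\ref{th:sushkevic_rees}, and to determining when that count equals one.

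First I would compute all idempotents of $S$. Using the multiplication rule, the square of a triple is
\[
(\lambda,g,i)(\lambda,g,i)=(\lambda,gp_{i\lambda}g,i),
\]
so $(\lambda,g,i)$ is idempotent precisely when $gp_{i\lambda}g=g$ in $G$, that is, when $g=p_{i\lambda}^{-1}$. Hence for every pair $(\lambda,i)\in\Lambda\times I$ there is exactly one idempotent, and $S$ has exactly $|\Lambda|\cdot|I|$ idempotents.

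For the forward implication, suppose $S$ is a group. Then $S$ has a single idempotent, so $|\Lambda|\cdot|I|=1$, which forces $|\Lambda|=|I|=1$. For the converse, assume $|\Lambda|=|I|=1$, say $\Lambda=\{\lambda_0\}$ and $I=\{i_0\}$. Since $\P$ is normalized, its unique entry $p_{i_0\lambda_0}$ lies simultaneously in the first row and the first column, hence equals $1\in G$. The multiplication rule then collapses to
\[
(\lambda_0,g,i_0)(\lambda_0,h,i_0)=(\lambda_0,gh,i_0),
\]
so the map $(\lambda_0,g,i_0)\mapsto g$ is an isomorphism of $S$ onto $G$, and $S$ is a group.

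The only real content is the idempotent count, together with the elementary fact that a group has a unique idempotent; the remaining steps---the evaluation of $(\lambda,g,i)^2$ and the collapse of the product under normalization---are routine, so I do not expect a genuine obstacle here.
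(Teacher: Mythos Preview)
Your argument is correct. The idempotent count is accurate: $(\lambda,g,i)^2=(\lambda,gp_{i\lambda}g,i)$ gives exactly one idempotent $g=p_{i\lambda}^{-1}$ for each pair $(\lambda,i)$, and the collapse of the multiplication when $|\Lambda|=|I|=1$ (using normalization to get $p_{11}=1$) is handled cleanly.

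The paper itself gives no proof of this corollary; it is stated immediately after the remark that $|\Lambda|$ and $|I|$ equal the numbers of minimal right and minimal left ideals of $S$. The intended (implicit) justification is therefore via one-sided ideals: a group has exactly one minimal right ideal and one minimal left ideal (namely itself), forcing $|\Lambda|=|I|=1$, while the converse is the same computation you gave. Your route through idempotents is a genuinely different invariant but equally short; it has the minor advantage of being entirely internal to the Rees-matrix description, without needing to recall which sets $\Lambda$ and $I$ index.
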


\medskip

The numbers $\lambda\in\Lambda$, $i\in I$ of an element $(\lambda,g,i)$ in a finite simple semigroup $S=(G,\P,\Lambda,I)$ is said to be the {\it first} and the {\it second} index respectively.

The minimal ideal (in finite semigroups it always exists) of a semigroup $S$ is called a \textit{kernel} and denoted by $Ker(S)$. Obviously, if $S=Ker(S)$ the semigroup is simple. If $Ker(S)$ is a group then $S$ is said to be a \textit{homogroup}. The next theorem contains the necessary information about homogroups.

\begin{theorem}
\label{th:homogroups_properties}
In a homogroup $S$ the identity element $e$ of the kernel $Ker(S)$ is idempotent (i.e. $e^2=e$) and belongs to the center of $S$.
\end{theorem}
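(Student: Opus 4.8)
The idempotency of $e$ is immediate: by hypothesis $e$ is the identity element of the group $Ker(S)$, so $e^2=ee=e$. The substance of the statement is therefore the centrality claim, namely that $es=se$ for every $s\in S$. The delicate point is that the hypothesis only provides $e$ as a two-sided identity on $Ker(S)$, not a priori on all of $S$. My plan is to bootstrap from this restricted identity property to full centrality, using crucially that $Ker(S)$ is a two-sided ideal.

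First I would fix an arbitrary $s\in S$ and observe that both $es$ and $se$ already lie in $Ker(S)$. Indeed $e\in Ker(S)$, and since $Ker(S)$ is a left ideal the product $es$ stays in the kernel, while since it is a right ideal the product $se$ stays in the kernel as well. This is precisely the step that makes the identity property of $e$ applicable to these two products, even though we do not yet know anything about how $e$ multiplies a general element of $S$.

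The key device is then to evaluate the triple product $ese$ in two ways by associativity. Grouping as $(es)e$ and using that $es\in Ker(S)$ together with the fact that $e$ is a right identity on $Ker(S)$ gives $ese=es$. Grouping instead as $e(se)$ and using that $se\in Ker(S)$ together with the fact that $e$ is a left identity on $Ker(S)$ gives $ese=se$. Comparing the two evaluations yields $es=se$, and since $s$ was arbitrary this shows $e$ commutes with every element of $S$, i.e.\ $e$ belongs to the center.

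I do not expect a serious obstacle here; the argument is short. The only point demanding care is the conceptual one noted above: the identity law for $e$ may be invoked only on elements of $Ker(S)$, so one must first confirm that the relevant products $es$ and $se$ land in $Ker(S)$ before applying it. The two-sided nature of the kernel as an ideal is exactly what guarantees this simultaneously for $es$ (via the left-ideal property) and for $se$ (via the right-ideal property), which is what lets the two-way computation of $ese$ close the argument.
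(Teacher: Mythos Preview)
Your argument is correct and is in fact the standard proof of this classical fact. The paper itself does not prove Theorem~\ref{th:homogroups_properties}; it is quoted as background from the literature (see the references to \cite{lyapin,skornyakov}), so there is no paper proof to compare against. Your bootstrapping via the triple product $ese$, evaluated once as $(es)e$ and once as $e(se)$, is exactly the intended device.

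One cosmetic remark: relative to the paper's conventions (a left ideal satisfies $sa\in I$ for $s\in S$, $a\in I$, and a right ideal satisfies $as\in I$), the labels in your justification are swapped---it is the \emph{right}-ideal property that places $es\in Ker(S)$ and the \emph{left}-ideal property that places $se\in Ker(S)$. Since $Ker(S)$ is two-sided this has no effect on the argument, but you may want to align the wording with the paper's definitions.
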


\section{Basics: algebraic geometry}

All definitions below are derived from the general notions of~\cite{uniTh_I,uniTh_II}, where the definitions of algebraic geometry were formulated for an arbitrary algebraic structure in the language with no predicates.

Semigroups as algebraic structures are often considered in the language $\LL_0=\{\cdot\}$. However, for a given semigroup $S$ one can add to the language $\LL_0$ the set of constants $\{s|s\in S\}$ which corresponds to all elements of the semigroup $S$. We denote the extended language by $\LL_S$, and further we consider all semigroups in such language.

Let $X$ be a finite set of variables  $x_1,x_2,\ldots,x_n$. \textit{An $\LL_S$-term} in the variables  $X$ is a finite product of variables and constants $s\in S$. For example, the following expressions $xsy^2x$, $xs_1ys_2x^2$, $x^2yxz$ are $\LL_S$-terms.

{\it An equation} over $\LL_S$ is an equality of two $\LL_S$-terms $t(X)=s(X)$. {\it A system of equations} over $\LL_S$ ({\it a system} for shortness) is an arbitrary set of equations over $\LL_S$.
 
A point $P=(p_1,p_2,\ldots,p_n)\in S^n$ is a \textit{solution} of a system $\Ss$ in variables $x_1,x_2,\ldots,x_n$, if the substitution $x_i=p_i$ reduces any equation of $\Ss$ to a true equality in the semigroup $S$. The set of all solutions of a system $\Ss$ in the semigroup $S$ is denoted by $\V_S(\Ss)$. A set $Y\subseteq S^n$ is called  {\it algebraic} over the language $\LL_S$ if there exists a system over $\LL_S$ in variables $x_1,x_2,\ldots,x_n$ with the solution set $Y$. 

Following~\cite{uniTh_IV}, let us give the main definition of our paper.

A semigroup $S$ is an {\it equational domain} ({\it e.d.} for shortness) in the language $\LL_S$ if for any finite set of algebraic sets $Y_1,Y_2,\ldots,Y_n$ over $\LL_S$ the union $Y=Y_1\cup Y_2\cup\ldots\cup Y_n$ is algebraic. 

Any single point $P=\{(p_1,p_2,\ldots,p_n)\}\subseteq S^n$ is algebraic set over any semigroup $S$ in the language $\LL_S$, since it equals to the solution of the system $\Ss_P(x_1,x_2,\ldots,x_n)=\{x_i=p_i|1\leq i\leq n\}$. As in a finite semigroup $S$ any set $M\subseteq S^n$ is a finite union of points, we obtain the next simple proposition.

\begin{proposition}
A finite semigroup $S$ is an e.d. in the language $\LL_S$ iff for any natural number $n$ every set $M\subseteq S^n$ is algebraic over $\LL_S$.
\end{proposition}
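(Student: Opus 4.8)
The plan is to prove the two implications separately, leaning entirely on the two facts recorded just before the statement: every singleton $\{P\}\subseteq S^n$ is algebraic (it is $\V_S$ of the system $\{x_i=p_i\mid 1\leq i\leq n\}$), and every subset of $S^n$ is a \emph{finite} union of such singletons because $S$, and hence $S^n$, is finite. So the whole argument is really an unwinding of definitions, with finiteness doing the work.

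For the direction ``e.d. $\Rightarrow$ every $M$ is algebraic'', I would fix $n$ and an arbitrary $M\subseteq S^n$. Since $S^n$ is finite I may write $M=\{P_1,\ldots,P_k\}$ as a finite set of points. Each singleton $\{P_j\}$ is algebraic, so when $k\geq 1$ the set $M=\{P_1\}\cup\cdots\cup\{P_k\}$ is a finite union of algebraic sets and is therefore algebraic precisely by the defining property of an equational domain. Conversely, for ``every $M$ is algebraic $\Rightarrow$ e.d.'', I would take any finite family of algebraic sets $Y_1,\ldots,Y_n\subseteq S^m$; their union $Y$ is again a subset of $S^m$, and by hypothesis \emph{every} subset of $S^m$ is algebraic, so $Y$ is algebraic. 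This is exactly the e.d.\ condition, and note that this direction does not even use that the $Y_i$ were algebraic to begin with.

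The only point that requires care is the empty set: in the forward direction the case $k=0$, i.e.\ $M=\emptyset$, is not produced by a nonempty union of singletons. I would dispose of it directly by exhibiting an inconsistent system, for instance the equation $a=b$ with two distinct constants $a,b\in S$, whose solution set in $S^n$ is empty. This works as soon as $|S|\geq 2$, so the statement is to be read for nontrivial $S$ (for the one-element semigroup $\emptyset$ fails to be algebraic, while $S$ is vacuously an equational domain). Apart from this small edge case, which is the only genuine subtlety, the proposition is immediate from the finiteness of $S$ together with the algebraicity of points.
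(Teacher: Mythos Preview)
Your proof is correct and follows exactly the approach the paper takes: the paper's ``proof'' is literally the sentence preceding the proposition, which records that every singleton is algebraic and that every subset of $S^n$ is a finite union of singletons. You have simply unpacked this into the two implications and added the empty-set edge case, which the paper does not mention; your handling of that case (an inconsistent equation $a=b$ with distinct constants, hence requiring $|S|\geq 2$) is the right fix, and your observation that the trivial semigroup is formally a counterexample to the ``if'' direction is accurate, though the paper clearly intends the nontrivial case throughout.
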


The next theorem contains the necessary and sufficient conditions for a semigroup to be an e.d.  

\begin{theorem}\textup{\cite{uniTh_IV}}
\label{th:about_M}
A semigroup $S$ in the language $\LL_S$ is an e.d. iff the set 
\[
\M_{sem}=\{(x_1,x_2,x_3,x_4)|x_1=x_2\mbox{ or }x_3=x_4\}\subseteq S^4
\]
is algebraic, i.e. there exists a system $\Ss$ in the variables $x_1,x_2,x_3,x_4$ with the solution set $\M_{sem}$.
\end{theorem}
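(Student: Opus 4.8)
The plan is to prove the two implications separately, with the forward direction immediate and the backward direction carrying all of the content. For the direction ($\Rightarrow$), I would simply observe that $\M_{sem}$ is itself a union of two algebraic sets: the set $\{x_1=x_2\}$ is the solution set of the single equation $x_1=x_2$, and $\{x_3=x_4\}$ is the solution set of the single equation $x_3=x_4$. If $S$ is an e.d., the union of these two algebraic sets is algebraic by the very definition of e.d., so $\M_{sem}$ is algebraic.

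For ($\Leftarrow$), suppose $\M_{sem}=\V_S(\Ss_0)$ for some system $\Ss_0$ in the variables $x_1,x_2,x_3,x_4$. The first step is a substitution observation: given any four $\LL_S$-terms $t_1,s_1,t_2,s_2$ in a variable set $X=(x_1,\ldots,x_n)$, I would replace $x_1,x_2,x_3,x_4$ in every equation of $\Ss_0$ by $t_1,s_1,t_2,s_2$ respectively. The result is again a system over $\LL_S$ in the variables $X$, and a point $P\in S^n$ satisfies it precisely when the tuple $(t_1(P),s_1(P),t_2(P),s_2(P))$ lies in $\M_{sem}$, i.e.\ precisely when $t_1(P)=s_1(P)$ or $t_2(P)=s_2(P)$. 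Hence the disjunction of \emph{two single equations} is always algebraic once $\M_{sem}$ is.

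The key step is then to reduce a disjunction of two arbitrary algebraic sets to this case by a distributive law. Write $Y_1=\V_S(\Ss_1)$ and $Y_2=\V_S(\Ss_2)$ with $\Ss_1=\{t_\al=s_\al\}$ and $\Ss_2=\{u_\beta=v_\beta\}$. A point lies in $Y_1\cup Y_2$ iff it satisfies every equation of $\Ss_1$ or every equation of $\Ss_2$; distributing the disjunction over the two conjunctions, this is equivalent to satisfying, for every pair $(\al,\beta)$, the disjunction ``$t_\al=s_\al$ or $u_\beta=v_\beta$''. Thus
\[
Y_1\cup Y_2=\bigcap_{\al,\beta}\{P : t_\al(P)=s_\al(P)\text{ or }u_\beta(P)=v_\beta(P)\}.
\]
Each set in this intersection is algebraic by the substitution observation, and an intersection of algebraic sets is algebraic because its defining system is just the union of the individual systems. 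Therefore $Y_1\cup Y_2$ is algebraic, and a one-line induction extends this to any finite union, so $S$ is an e.d.

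I expect the only genuinely delicate point to be the distributive rewriting: one must verify carefully that $(\bigwedge_\al A_\al)\lor(\bigwedge_\beta B_\beta)$ is logically equivalent to $\bigwedge_{\al,\beta}(A_\al\lor B_\beta)$, and in particular that this holds even when the index sets are infinite, so that no finiteness of the systems $\Ss_1,\Ss_2$ is secretly used. Once this purely Boolean identity is established alongside the substitution observation, the remainder of the argument is routine.
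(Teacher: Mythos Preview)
The paper does not supply its own proof of this theorem: it is quoted verbatim from~\cite{uniTh_IV} and simply cited. So there is no in-paper argument to compare against.

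Your proof is correct and is the standard one. The forward direction is immediate as you note. For the backward direction, both ingredients are sound: (i) substituting $\LL_S$-terms for the variables of $\Ss_0$ does yield another $\LL_S$-system, so any single disjunction $\{t_1=s_1\}\cup\{t_2=s_2\}$ is algebraic; and (ii) the Boolean identity
\[
\Bigl(\bigwedge_{\al}A_\al\Bigr)\lor\Bigl(\bigwedge_{\beta}B_\beta\Bigr)\ \Longleftrightarrow\ \bigwedge_{\al,\beta}(A_\al\lor B_\beta)
\]
holds for arbitrary (possibly infinite) index sets, as you correctly flag---the right-to-left direction follows because if some $A_{\al_0}$ fails then every $B_\beta$ must hold. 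Since systems are allowed to be infinite sets of equations, the resulting intersection over all pairs $(\al,\beta)$ is again the solution set of a single system. The induction to finite unions is routine. Nothing is missing.
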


Below we will study equations over groups, therefore we give some definitions of algebraic geometry over groups. Any group $G$ below will be considered in the language $\LL_G=\{\cdot,^{-1},1\}\cup\{g|g\in G\}$ extended by the constants $\{g|g\in G\}$. \textit{An $\LL_G$-term}  in the variables  $X=\{x_1,x_2,\ldots,x_n\}$ is a finite product which consists of the variables in integer degrees and constants $g\in G$. In other words, an $\LL_G$-term is an element of the free product $F(X)\ast G$, where $F(X)$ is a free group generated by the set $X$. 

The definitions of equations, algebraic sets and equational domains over groups are similar to the corresponding definitions in the semigroup case. 

For the groups of the language $\LL_G$ we have the following result.

\begin{theorem}\textup{\cite{uniTh_IV}}
A group $G$ in the language $\LL_G$ is an e.d. iff the set 
\label{th:criterion_for_groups}
\begin{equation*}
\M_{gr}=\{(x_1,x_2)|x_1=1\mbox{ or }x_2=1\}\subseteq G^2
\end{equation*}
is algebraic, i.e. there exists a system $\Ss$ in variables $x_1,x_2$ with the solution set $\M_{gr}$.
\end{theorem}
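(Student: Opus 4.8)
The plan is to prove both implications, the forward one being essentially immediate and the reverse one carrying the real content.

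For the forward direction I would argue as follows. Suppose $G$ is an e.d. The two sets $Y_1=\{(x_1,x_2)\mid x_1=1\}$ and $Y_2=\{(x_1,x_2)\mid x_2=1\}$ are each algebraic, being the solution sets of the single equations $x_1=1$ and $x_2=1$ (with the other variable free). Since $G$ is an e.d., their union is algebraic; but this union is exactly $\M_{gr}$. Hence $\M_{gr}$ is algebraic.

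For the reverse direction, assume $\M_{gr}=\V_G(\T)$ for some system $\T(x_1,x_2)$. First I would reduce to the case of a union of two algebraic sets: since $Y_1\cup\dots\cup Y_n=(Y_1\cup\dots\cup Y_{n-1})\cup Y_n$, an easy induction shows it suffices to handle two sets. Given $Y_1=\V_G(\{u_i(X)=1\})$ and $Y_2=\V_G(\{v_j(X)=1\})$ — where I use that over a group every equation $s(X)=t(X)$ is equivalent to $s(X)t(X)^{-1}=1$ with $s(X)t(X)^{-1}\in F(X)\ast G$ — the key is the logical identity
\[
\Bigl(\forall i:u_i(P)=1\Bigr)\ \text{or}\ \Bigl(\forall j:v_j(P)=1\Bigr)\ \Longleftrightarrow\ \forall i\,\forall j\,\Bigl(u_i(P)=1\ \text{or}\ v_j(P)=1\Bigr),
\]
whose nontrivial direction I would verify by noting that if the left side fails in its first disjunct, then some $u_{i_0}(P)\neq 1$, so the right side forces $v_j(P)=1$ for every $j$. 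Consequently
\[
Y_1\cup Y_2=\bigcap_{i,j}\{P\mid u_i(P)=1\ \text{or}\ v_j(P)=1\}.
\]
Each set in this intersection is algebraic: substituting the group words $u_i(X),v_j(X)$ for $x_1,x_2$ in $\T$ yields a system $\T(u_i(X),v_j(X))$ in the variables $X$ whose solution set is exactly $\{P\mid u_i(P)=1\ \text{or}\ v_j(P)=1\}$, since $(a,b)\in\V_G(\T)$ iff $a=1$ or $b=1$. Finally, an arbitrary intersection of algebraic sets is algebraic, as one may take the union of the defining systems (systems are permitted to be infinite). This exhibits $Y_1\cup Y_2$ as algebraic and completes the argument.

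The step I expect to require the most care is the substitution: one must confirm that replacing the variables $x_1,x_2$ of $\T$ by the words $u_i(X),v_j(X)$ produces a legitimate system over $\LL_G$ and that its solution set is precisely the binary disjunction. This is exactly where the group structure is essential — having inverses lets us rewrite every equation in the form $w=1$, so that the two-variable set $\M_{gr}$ (rather than the four-variable $\M_{sem}$ of the semigroup case) already suffices to encode all disjunctions of systems.
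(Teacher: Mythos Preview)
The paper does not supply its own proof of this statement: it is quoted from \cite{uniTh_IV} as a background result (as are Theorems~\ref{th:about_M} and~\ref{th:zero_divisors}), so there is nothing in the present paper to compare your argument against.

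That said, your proof is correct and is the standard one. The forward implication is immediate, and for the converse your three ingredients --- normalising every group equation to the form $w(X)=1$, the distributive identity
\[
\Bigl(\bigwedge_i u_i=1\Bigr)\vee\Bigl(\bigwedge_j v_j=1\Bigr)\ \Longleftrightarrow\ \bigwedge_{i,j}\bigl(u_i=1\ \vee\ v_j=1\bigr),
\]
and substitution of $u_i(X),v_j(X)$ into the system $\T$ defining $\M_{gr}$ --- are exactly what is used in \cite{uniTh_IV}. Your closing remark is also the right diagnosis: the availability of inverses is what collapses the four-variable semigroup criterion $\M_{sem}$ to the two-variable $\M_{gr}$, and the same substitution-and-intersection argument proves Theorem~\ref{th:about_M} once one works with pairs of terms $(t_i,s_i)$ rather than single words. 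The only point worth making explicit in a write-up is that the paper allows systems to be arbitrary (possibly infinite) sets of equations, which you need when the index sets for $i,j$ are infinite.
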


One can reformulate Criterion~\ref{th:criterion_for_groups} in more simple form using the next definition. An element $x\neq 1$ of a group $G$ is a \textit{zero-divisor} if there exists $1\neq y\in G$ such that for any $g\in G$ it holds $[x,y^g]=1$ (here $y^g=gyg^{-1}$, $[a,b]=a^{-1}b^{-1}ab$).

\begin{theorem}\textup{\cite{uniTh_IV}}
\label{th:zero_divisors}
A group $G$ in the language $\LL_G$ is an e.d. iff it does not contain zero-divisors.
\end{theorem}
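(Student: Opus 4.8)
The plan is to route everything through the criterion of Theorem~\ref{th:criterion_for_groups}: since $G$ is an e.d. if and only if $\M_{gr}$ is algebraic, it suffices to show that $\M_{gr}$ is algebraic exactly when $G$ contains no zero-divisors. I would establish the two implications separately, producing an explicit defining system for one direction and an obstruction argument for the other.

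For the implication ``no zero-divisors $\Rightarrow$ $\M_{gr}$ algebraic'' I would simply exhibit the candidate system
\[
\Ss=\{\,[x_1,x_2^{g}]=1 \mid g\in G\,\}
\]
and check that $\V_G(\Ss)=\M_{gr}$. A point $(a,b)$ solves $\Ss$ precisely when $a$ commutes with every conjugate $b^{g}$, i.e.\ with the whole normal closure of $b$; if $a=1$ or $b=1$ this holds, so $\M_{gr}\subseteq\V_G(\Ss)$. Conversely, a solution with $a\neq1$ and $b\neq1$ would exhibit $a$ as a zero-divisor witnessed by $y=b$, which the hypothesis forbids, giving $\V_G(\Ss)\subseteq\M_{gr}$. (The definition of a system permits $\Ss$ to be infinite, which is all that is needed here.)

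The substantial direction, ``$\M_{gr}$ algebraic $\Rightarrow$ no zero-divisors'', I would prove contrapositively. Suppose $x\neq1$ is a zero-divisor witnessed by $y\neq1$, so $[x,y^{g}]=1$ for all $g$; let $M,N$ be the normal closures of $x,y$. Since $N$ is normal, the fact that $x$ centralises $N$ propagates to every conjugate of $x$, whence $[M,N]=1$. Assuming $\M_{gr}=\V_G(\Ss_0)$ for some system, I would derive a contradiction by showing the forbidden point $(x,y)$ satisfies every equation $w(x_1,x_2)=1$ of $\Ss_0$. The key claim is: if $w(g,1)=1$ and $w(1,h)=1$ for all $g,h\in G$, then $w(x,y)=1$. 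To prove it, expand $w=c_0\,x_{j_1}^{e_1}c_1\cdots x_{j_k}^{e_k}c_k$ and collect the constants to the right, obtaining $w(x,y)=\bigl(\prod_i\tilde t_i\bigr)C$, where $C=c_0\cdots c_k$ and each $\tilde t_i$ is a constant-conjugate of $x^{\pm1}$ (hence in $M$) or of $y^{\pm1}$ (hence in $N$). Evaluating at $(1,1)\in\M_{gr}$ forces $C=1$. Using $[M,N]=1$ I may reorder the factors so that $w(x,y)=m\,n$ with $m\in M$ the ordered product of the $x$-factors and $n\in N$ that of the $y$-factors. But $m$ is exactly $w(x,1)$ and $n$ is exactly $w(1,y)$, both $=1$ since $(x,1),(1,y)\in\M_{gr}$; hence $w(x,y)=1$. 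This places $(x,y)\in\V_G(\Ss_0)=\M_{gr}$, contradicting $x,y\neq1$, so $\M_{gr}$ is not algebraic.

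I expect the collecting-and-reordering step to be the main obstacle. One must verify that pulling the constants out as pure conjugations is legitimate, that vanishing at $(1,1)$ genuinely kills the total constant $C$, and above all that it is $[M,N]=1$ — not merely $[x,y]=1$ — which licenses commuting the $M$-factors past the $N$-factors. This last point is exactly where the full force of the zero-divisor hypothesis (commuting with \emph{all} conjugates of the witness) is consumed, and it is the hinge on which the whole argument turns.
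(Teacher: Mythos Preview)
The paper does not actually prove Theorem~\ref{th:zero_divisors}; it is quoted from~\cite{uniTh_IV} and stated without proof, so there is no ``paper's own proof'' to compare your argument against. That said, your argument is sound in both directions. For the direction ``no zero-divisors $\Rightarrow$ $\M_{gr}$ algebraic'', the system $\{[x_1,x_2^{g}]=1\mid g\in G\}$ does exactly what you claim, and the paper's definition of a system as an \emph{arbitrary} set of equations explicitly allows it to be infinite. For the contrapositive direction, the key steps all check: the centraliser of the normal subgroup $N$ is normal, so $x\in C_G(N)$ forces $M\subseteq C_G(N)$ and hence $[M,N]=1$; the constant-collection identity $w(x,y)=\bigl(\prod_i \tilde t_i\bigr)C$ with $C=c_0\cdots c_k$ is a straightforward induction; $(1,1)\in\M_{gr}$ kills $C$; and because commuting $M$-factors past $N$-factors never disturbs the internal order within each class, the resulting $m$ and $n$ really do coincide with $w(x,1)$ and $w(1,y)$. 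Your self-identified ``main obstacle'' is handled correctly, and the place where the full strength of the zero-divisor hypothesis enters (namely $[M,N]=1$, not merely $[x,y]=1$) is exactly as you describe.
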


\section{Finite simple semigroups}

Let $S=(G,\P,\Lambda,I)$ be a finite simple semigroup. Define a set
\begin{equation*}
\Gamma=\{(1,g,1)|g\in G\}\subseteq S.
\end{equation*}

It is easy to prove that $\Gamma$ is isomorphic to the group $G$ with the identity element $(1,1,1)$. 

We say that the matrix $\P$ is {\it nonsingular} if it does not contain two equal rows or columns. 

Let $M$ be a subset of $S^n$. By $\T(M,\Gamma)$ denote the set of all $\LL_S$-terms in variables $x_1,x_2,\ldots,x_n$ whose values belong to the subgroup $\Gamma$ for all points $P\in M$. For example, $t(x)=(1,g,2)x(3,h,1)\in\T(S,\Gamma)$, $s(x,y)=(1,g,1)x^2(3,h,4)y(2,f,1)\in\T(S^2,\Gamma)$. The next statement holds. 

\begin{lemma}
Any term of the set $\T(S^n,\Gamma)$ should begin with a constant of the form $(1,g,k)$ and end by  $(\lambda,h,1)$ for some $\lambda,h$.
\label{l:term_from_T_must_begin}
\end{lemma}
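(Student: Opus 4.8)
The plan is to exploit the single structural feature of the Rees multiplication in Theorem~\ref{th:sushkevic_rees}: in a product $(\lambda,g,i)(\mu,h,j)=(\lambda,gp_{i\mu}h,j)$ the first index of the product coincides with the first index of the left factor and the second index coincides with the second index of the right factor. By an immediate induction on the number of factors, for any product $s_1 s_2\cdots s_m$ of elements of $S$ the first index equals that of $s_1$ and the second index equals that of $s_m$; the intermediate factors influence only the $G$-coordinate. This is the one fact the whole argument rests on.

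First I would fix a term $t(X)=a_1 a_2\cdots a_m\in\T(S^n,\Gamma)$, where each $a_j$ is either a constant from $S$ or one of the variables $x_1,\ldots,x_n$. For an arbitrary point $P\in S^n$ the value $t(P)$ is obtained by replacing each variable by the corresponding coordinate of $P$, so $t(P)$ is a product of $m$ elements of $S$ whose leftmost factor is $a_1$ when $a_1$ is a constant, and is the coordinate substituted for $a_1$ when $a_1$ is a variable; the rightmost factor $a_m$ is treated dually. Since $t\in\T(S^n,\Gamma)$, we have $t(P)\in\Gamma$ for every $P$, and hence, by the observation above, the first index of the leftmost factor's value and the second index of the rightmost factor's value must equal $1$ for every choice of $P$.

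Next I would rule out that $a_1$ is a variable. If $a_1=x_j$, then its value ranges over all of $S$ as $P$ varies, so I may choose $P$ whose $j$-th coordinate has first index $\lambda\neq 1$; then $t(P)$ has first index $\lambda\neq 1$, contradicting $t(P)\in\Gamma$. Therefore $a_1$ is a constant, and since the first index of $t(P)$, which equals the first index of $a_1$, must be $1$, this constant has the form $(1,g,k)$. The argument for the right end is symmetric: $a_m$ must be a constant whose second index equals $1$, i.e. of the form $(\lambda,h,1)$.

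I expect the only delicate point to be bookkeeping rather than mathematics: the freedom to pick a coordinate with first index different from $1$ presupposes $|\Lambda|\geq 2$ (and, dually, $|I|\geq 2$ for the last factor), which is precisely the non-group setting in which $\Gamma$ is a proper subset of $S$ and in which this lemma is used. In the degenerate case $|\Lambda|=1$ the first index is already forced to equal $1$, so nothing has to be arranged there. Apart from this the proof is just the index-tracking above, so I anticipate no computational difficulty.
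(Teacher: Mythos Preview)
Your argument is correct and is the natural one: the first (respectively second) index of any product in $S$ is inherited from its leftmost (respectively rightmost) factor, so a term in $\T(S^n,\Gamma)$ beginning with a variable would allow a substitution with first index $\neq 1$, forcing the value out of $\Gamma$. The paper states this lemma without proof, so there is no alternative argument to compare against; what you have written is exactly the intended justification.

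One small sharpening of your final paragraph. You are right that the step ``choose a coordinate with first index $\neq 1$'' requires $|\Lambda|\ge 2$, but your phrase ``nothing has to be arranged there'' understates the situation: when $|\Lambda|=1$ the lemma as literally stated is in fact false, since for instance $t(x)=x\,(1,1,1)$ lies in $\T(S,\Gamma)$ (every value has the form $(1,g,1)$) yet begins with a variable. So the lemma should be read under the tacit hypothesis $|\Lambda|,|I|\ge 2$, i.e.\ the genuinely non-group case in which it is subsequently used. Under that hypothesis your proof is complete.
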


\medskip

Let $s_1,s_2$ be two distinct elements of a semigroup $S$. We say that an $\LL_S$-term {\it separates} the elements $s_1,s_2$ if $t(s_1)\neq t(s_2)$.

\begin{lemma}
\label{l:exists_dist_term}
Suppose for a finite simple semigroup $S=(G,\P,\Lambda,I)$ the matrix $\P$ is nonsingular. Then for any pair of distinct elements $s_1,s_2\in S$ there exists a term  $t(x)\in \T(S,\Gamma)$ separating $s_1,s_2$.
\end{lemma}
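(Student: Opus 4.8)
The plan is to exhibit an explicit separating term of the simplest shape permitted by Lemma~\ref{l:term_from_T_must_begin}, namely a single occurrence of $x$ flanked by two constants. Concretely, I would try $t(x)=(1,1,k)\,x\,(\mu,1,1)$ for suitably chosen $k\in I$ and $\mu\in\Lambda$. Writing $s_1=(\lambda_1,g_1,i_1)$ and $s_2=(\lambda_2,g_2,i_2)$ and applying the Rees multiplication of Theorem~\ref{th:sushkevic_rees}, a direct computation gives $t((\lambda,g,i))=(1,\,p_{k\lambda}\,g\,p_{i\mu},\,1)$, which lies in $\Gamma$ for every argument; hence $t\in\T(S,\Gamma)$ no matter how $k,\mu$ are chosen. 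Thus separating $s_1$ from $s_2$ reduces to finding indices $k,\mu$ with
\[
p_{k\lambda_1}\,g_1\,p_{i_1\mu}\neq p_{k\lambda_2}\,g_2\,p_{i_2\mu}.
\]

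First I would argue by contradiction, supposing this equality fails to hold for every $k\in I$ and $\mu\in\Lambda$, i.e. that the two products coincide for all $k,\mu$. The key tool is the normalization of $\P$: its first row and first column consist of $1\in G$. Specializing $k=1$ kills the factors $p_{1\lambda_1}=p_{1\lambda_2}=1$ and leaves $g_1p_{i_1\mu}=g_2p_{i_2\mu}$ for all $\mu$; specializing $\mu=1$ inside this (using $p_{i_11}=p_{i_21}=1$) forces $g_1=g_2$, and feeding this back yields $p_{i_1\mu}=p_{i_2\mu}$ for all $\mu$, so rows $i_1$ and $i_2$ of $\P$ coincide. Symmetrically, specializing $\mu=1$ in the assumed identity gives $p_{k\lambda_1}g_1=p_{k\lambda_2}g_2$ for all $k$, which together with $g_1=g_2$ says that columns $\lambda_1$ and $\lambda_2$ coincide. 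This is exactly where nonsingularity of $\P$ enters: equal rows force $i_1=i_2$ and equal columns force $\lambda_1=\lambda_2$. Together with $g_1=g_2$ this gives $s_1=s_2$, contradicting $s_1\neq s_2$; hence the required $k,\mu$ exist and the corresponding $t$ separates $s_1,s_2$.

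The computations are routine group manipulations, so the only genuine content — and the step I would treat most carefully — is the decoupling of the three coordinates. The worry is that a difference in one coordinate of $s_1,s_2$ might be masked by a compensating difference in another inside the product $p_{k\lambda}\,g\,p_{i\mu}$. The normalized first row and column are precisely what prevent this: evaluating at $k=1$ isolates the group/row data while evaluating at $\mu=1$ isolates the column data, so nonsingularity can be applied coordinatewise. I expect no obstacle beyond bookkeeping, and in particular no case analysis on which coordinates of $s_1,s_2$ differ, since the contradiction argument disposes of all cases uniformly.
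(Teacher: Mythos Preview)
Your argument is correct. The computation $t((\lambda,g,i))=(1,p_{k\lambda}gp_{i\mu},1)$ is right, and the decoupling via the normalized first row and column of $\P$ works exactly as you describe: specializing $k=1$ and $\mu=1$ in the assumed identity isolates $g_1=g_2$, then forces the relevant rows and columns to agree, so nonsingularity yields $s_1=s_2$.

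The paper's proof uses the same family of terms but organizes the argument as a case split rather than a single contradiction. It first treats the case where the group components differ, using $t(x)=(1,1,1)x(1,1,1)$ directly; then, assuming $g_1=g_2$ and (say) $\lambda_1\neq\lambda_2$, it runs through the one-parameter family $t(x)=(1,1,k)x(1,1,1)$ and shows that failure to separate forces $p_{k\lambda_1}=p_{k\lambda_2}$ for all $k$, contradicting nonsingularity; the case $i_1\neq i_2$ is symmetric. Your two-parameter family $(1,1,k)x(\mu,1,1)$ subsumes all of these at once and lets the normalization do the case-splitting implicitly, which is a bit cleaner; the paper's version has the modest advantage of exhibiting a concrete separating term in each case rather than only asserting existence via contradiction.
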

\begin{proof}
Consider two cases.
\begin{enumerate}
\item Let $s_1=(\lambda,g,i)$, $s_2=(\mu,h,j)$, $h\neq g$. Put $t(x)=(1,1,1)x(1,1,1)$. We have 
\[
t(s_1)=(1,1,1)(\lambda,g,i)(1,1,1)=(1,p_{1\lambda}gp_{i1},1)=(1,g,1),
\]
\[
t(s_2)=(1,1,1)(\mu,h,j)(1,1,1)=(1,p_{1\mu}hp_{j1},1)=(1,h,1),
\]
and hence $t(s_1)\neq t(s_2)$.
\item Let $s_1=(\lambda,g,i)$, $s_2=(\mu,g,j)$ and $\lambda\neq\mu$ (the proof of the case $i\neq j$ is similar). Assume that all terms of the form $t(x)=(1,1,k)x(1,1,1)\in\T(S,\Gamma)$ do not separate   $s_1,s_2$. In other words, the elements
\[
(1,1,k)(\lambda,g,i)(1,1,1)=(1,p_{k\lambda}g,1)(1,1,1)=(1,p_{k\lambda}gp_{11},1)=(1,p_{k\lambda}g,1)
\]
and 
\[
(1,1,k)(\mu,g,j)(1,1,1)=(1,p_{k\mu}g,1)(1,1,1)=(1,p_{k\mu}gp_{11},1)=(1,p_{k\mu}g,1)
\]
equal to each other for any $k\in I$. Thus, it holds $p_{k\lambda}=p_{k\mu}$ for all $k\in I$. It means that the columns with indexes $\lambda,\mu$ are the same, hence the matrix $\P$ is singular. We came to the contradiction.
\end{enumerate}
\end{proof}

\begin{lemma}
\label{l:exists_2_non_dist_elems}
Suppose the matrix $\P$ of a finite simple semigroup $S=(G,\P,\Lambda,I)$ has equal rows (columns) with indexes $i,j$ ($\lambda,\mu$). Then for the elements $s_1=(1,1,i)$, $s_2=(1,1,j)$ ($s_1=(\lambda,1,1)$, $s_2=(\mu,1,1)$) and for an arbitrary $\LL_S$-term $t(x)$ one of the following conditions holds:
\begin{enumerate}
\item $t(s_1)=t(s_2)$;
\item $t(s_1)=(\nu,g,i)$, $t(s_2)=(\nu,g,j)$ for some $g\in G$, $\nu\in\Lambda$ if $t(x)$ ends on the variable $x$ ($t(s_1)=(\lambda,g,k)$, $t(s_2)=(\mu,g,k)$ for some $g\in G$, $k\in I$ if $t(x)$ begins with $x$). 
\end{enumerate}
\end{lemma}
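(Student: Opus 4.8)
The plan is to compute $t(s_1)$ and $t(s_2)$ directly from the Rees matrix multiplication rule and to track precisely how the one coordinate in which $s_1,s_2$ differ propagates into the product. I would treat the equal-rows case in detail; the equal-columns case is the left--right dual, with the roles of $I$ and $\Lambda$, of first and second indices, and of rows and columns interchanged.

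First I would record the shape of a product of triples. From $(\lambda,g,i)(\mu,h,j)=(\lambda,gp_{i\mu}h,j)$ an easy induction gives that in a product $(\lambda_1,g_1,i_1)\cdots(\lambda_r,g_r,i_r)$ the first index of the result is $\lambda_1$ (that of the leftmost factor), the second index is $i_r$ (that of the rightmost factor), and the group coordinate is $g_1 p_{i_1\lambda_2} g_2 p_{i_2\lambda_3}\cdots p_{i_{r-1}\lambda_r} g_r$, i.e. the group coordinates interleaved with matrix entries of the form $p_{(\text{second index of a factor})(\text{first index of the next factor})}$. I would then normalize the term: since the first row and first column of $\P$ are all $1$, a one-line computation shows $s_1=(1,1,i)$ and $s_2=(1,1,j)$ are idempotents, so $x^a$ at $s_1$ or $s_2$ equals $s_1$ or $s_2$ for every $a\ge 1$. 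Collapsing maximal blocks of constants, I may thus assume
\[
t(x)=u_0\,x\,u_1\,x\cdots x\,u_m,
\]
where each $u_\ell\in S$ is a constant or empty. If $x$ does not occur, $t$ is constant and $t(s_1)=t(s_2)$, which is alternative~(1).

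Now I would substitute. Each copy of $x$ contributes first index $1$ and group coordinate $1$ whether it equals $s_1$ or $s_2$, so the differing index $i$ versus $j$ can enter the value of $t$ only through a matrix entry $p_{i\mu}$ (resp.\ $p_{j\mu}$) produced when a copy of $x$ is immediately followed by another factor, or through the overall second index when a copy of $x$ is the rightmost factor. The hypothesis that rows $i,j$ coincide, $p_{i\mu}=p_{j\mu}$ for all $\mu$, neutralizes the first source: every such entry is the same under the two substitutions, so the group coordinate and the first index of $t(s_1)$ and $t(s_2)$ agree. It then suffices to inspect the rightmost factor: if $t$ ends in a nonempty constant $u_m$, the overall second index is that of $u_m$ in both cases, giving alternative~(1); if $t$ ends in the variable $x$, the overall second index is $i$ for $s_1$ and $j$ for $s_2$ while the common first index $\nu$ and group coordinate $g$ are shared, giving $t(s_1)=(\nu,g,i)$ and $t(s_2)=(\nu,g,j)$, which is alternative~(2).

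The step I expect to require the most care is the bookkeeping in the substitution: making precise that an interior occurrence of $x$ influences the product only through the matrix entry immediately to its right, so that the equal-rows hypothesis genuinely cancels every contribution of the differing index except the terminal one. Once this is stated cleanly both alternatives drop out, and the equal-columns case follows by the dual argument, using that $s_1=(\lambda,1,1)$ and $s_2=(\mu,1,1)$ are idempotents, that equal columns give $p_{k\lambda}=p_{k\mu}$ for all $k$, and that the overall first index can differ only when $t$ begins with $x$.
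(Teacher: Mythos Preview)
Your proposal is correct and follows essentially the same approach as the paper: a direct computation with the Rees multiplication rule, collapsing powers of $s_1,s_2$ (using that the first row and column of $\P$ are $1$), and then observing that the only place the differing index survives is the terminal (respectively initial) position of the product. The paper treats the equal-columns case by writing out the product explicitly rather than phrasing it structurally as you do, but the argument and case split are the same.
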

\begin{proof}
Let $\P$ has equal columns with indexes $\lambda,\mu$ (similarly, one can consider $\P$ with equal rows).

Let us consider an $\LL_S$-term $t(x)$ which does not begin with the variable $x$. 
\begin{equation*}
t(x)=\c_1x^{n_1}\c_2x^{n_2}\c_3\ldots \c_mx^{n_m}\c_{m+1},
\end{equation*}
where $\c_k=(\lambda_k,c_k,i_k)$.

Compute    
\begin{multline*}
t(s_1)=(\lambda_1,c_1,i_1)(\lambda,1,1)^{n_1}(\lambda_2,c_2,i_2)(\lambda,1,1)^{n_2}(\lambda_3,c_3,i_3)\ldots (\lambda_m,c_m,i_m)(\lambda,1,1)^{n_m}\\
(\lambda_{m+1},c_{m+1},i_{m+1})=(\lambda_1,c_1,i_1)((\lambda,1,1)(\lambda_2,c_2,i_2))((\lambda,1,1)(\lambda_3,c_3,i_3))\ldots (\lambda_m,c_m,i_m))\\
((\lambda,1,1)(\lambda_{m+1},c_{m+1},i_{m+1}))=
(\lambda_1,c_1,i_1)(\lambda,c_2,i_2)(\lambda,c_3,i_3)\ldots (\lambda,c_m,i_m)
(\lambda,c_{m+1},i_{m+1})=\\
(\lambda_1,c_1p_{i_1\lambda}c_2p_{i_2\lambda}c_3p_{i_3\lambda}\ldots c_mp_{i_m\lambda}c_{m+1},i_{m+1}).
\end{multline*}
Similarly,
\[
t(s_2)=(\lambda_1,c_1p_{i_1\mu}c_2p_{i_2\mu}c_3p_{i_3\mu}\ldots c_mp_{i_m\mu}c_{m+1},i_{m+1}).
\]
As the columns in $\P$ with indexes $\lambda,\mu$ are equal to each other, for any $k$ we have $p_{i_k\lambda}=p_{i_k\mu}$, hence $t(s_1)=t(s_2)$.

Consider now a term $t(x)=x^nt^\pr(x)$, where $t^\pr(x)$ does not begin with constant. Above we proved  $t^\pr(s_1)=t^\pr(s_2)=(\nu,g,k)$. Thus,
\[
t(s_1)=(\lambda,1,1)^n(\nu,g,k)=(\lambda,1,1)(\nu,g,k)=(\lambda,g,k),
\] 
\[
t(s_2)=(\mu,1,1)^n(\nu,g,k)=(\mu,1,1)(\nu,g,k)=(\mu,g,k).
\]
\end{proof}

\begin{lemma}
\label{l:singular->not_ED}
If the matrix $\P$ is singular, a finite simple semigroup $S=(G,\P,\Lambda,I)$ is not an e.d. in the language $\LL_S$.
\end{lemma}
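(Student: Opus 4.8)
The plan is to use the criterion of Theorem~\ref{th:about_M}: it suffices to prove that the set $\M_{sem}\subseteq S^4$ is \emph{not} algebraic over $\LL_S$. Since singularity of $\P$ means that $\P$ has two equal rows or two equal columns, and the two situations are interchanged by the left--right duality of Theorem~\ref{th:sushkevic_rees}, I treat only the case of two equal columns with indices $\lambda,\mu\in\Lambda$, $\lambda\neq\mu$. I then single out the two elements supplied by Lemma~\ref{l:exists_2_non_dist_elems}, namely $s_1=(\lambda,1,1)$ and $s_2=(\mu,1,1)$, and restrict all attention to the finite ``cube'' $C=\{s_1,s_2\}^4\subseteq S^4$. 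Writing a point of $C$ as $(s_{b_1},s_{b_2},s_{b_3},s_{b_4})$ with $b\in\{1,2\}^4$, one has $C\cap\M_{sem}=\{b:b_1=b_2\text{ or }b_3=b_4\}$, a set of $12$ points, while the ``bad point'' $P^\ast=(s_1,s_2,s_1,s_2)$, i.e. $b=(1,2,1,2)$, lies outside $\M_{sem}$.

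The key step is to upgrade the single-variable computation of Lemma~\ref{l:exists_2_non_dist_elems} to all four variables at once and to record how an arbitrary $\LL_S$-term behaves on $C$. Expanding a term into a product of triples and using the formula $(\alpha_1,g_1,i_1)\cdots(\alpha_r,g_r,i_r)=(\alpha_1,\,g_1p_{i_1\alpha_2}\cdots p_{i_{r-1}\alpha_r}g_r,\,i_r)$, every occurrence of a variable contributes a factor $(\alpha,1,1)$ with $\alpha\in\{\lambda,\mu\}$; such an interior factor influences the product only through $p_{i_{k-1}\alpha}$ (equal for $\lambda$ and $\mu$, since the $\lambda$- and $\mu$-columns coincide) and through $p_{1\,\alpha_{k+1}}=1$ (the first row of $\P$ is trivial). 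Hence the value of a term $w$ at a point of $C$ depends on $b$ \emph{only through the first index of its leading factor}: if $w$ begins with a constant, its value is one fixed element of $S$ on all of $C$, whereas if $w$ begins with a variable $x_j$, its value equals $(\lambda,g,k)$ when $b_j=1$ and $(\mu,g,k)$ when $b_j=2$, for fixed $g\in G$, $k\in I$.

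From this I read off the solution set inside $C$ of a single equation $w=v$. Comparing the two values case by case (constant/constant, variable/constant, variable/variable), the set of $b$ satisfying $w=v$ is always one of: all of $C$; the empty set; a single ``slab'' $\{b_j=c\}$ for some coordinate $j$ and value $c\in\{1,2\}$; or a ``diagonal'' $\{b_j=b_{j'}\}$ for two coordinates $j\neq j'$. Now assume, for contradiction, that some system $\Ss$ in $x_1,\ldots,x_4$ has $\V_S(\Ss)=\M_{sem}$. Every equation of $\Ss$ holds on $\M_{sem}$, hence on all $12$ points of $C\cap\M_{sem}$. But a slab fails at one of the points $(1,1,1,1),(2,2,2,2)\in\M_{sem}$, and a nontrivial diagonal $\{b_j=b_{j'}\}$, $j\neq j'$, fails at a suitable point of $C\cap\M_{sem}$ with $b_j\neq b_{j'}$ (for $\{j,j'\}=\{1,2\}$ take $(1,2,1,1)$; for $\{3,4\}$ take $(1,1,1,2)$; for a mixed pair take a point whose $\{1,2\}$-coordinates agree and set the conflicting coordinate in $\{3,4\}$ to $2$). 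Therefore no slab or nontrivial diagonal can occur as the solution set of an equation of $\Ss$, so every equation of $\Ss$ must hold on the whole of $C$. But then $C\subseteq\V_S(\Ss)=\M_{sem}$, contradicting $P^\ast\in C\setminus\M_{sem}$.

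The main obstacle is the structural step of the second paragraph: one must verify that, on the cube $C$, the interior occurrences of the variables genuinely contribute nothing that distinguishes $s_1$ from $s_2$, so that only the leading factor survives. This is exactly where both normalizations of Theorem~\ref{th:sushkevic_rees} enter — the triviality of the first row kills the factor $p_{1\,\alpha}$ coming from the second index $1$ of each variable occurrence, and the equality of the $\lambda$- and $\mu$-columns kills the factor $p_{i\,\alpha}$ coming from its first index. Once this is established, the classification of single-equation solution sets and the witness argument against slabs and diagonals are routine, and the left--right duality disposes of the equal-rows case.
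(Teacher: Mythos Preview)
Your argument is correct. The structural step --- that on the cube $C=\{s_1,s_2\}^4$ the value of any $\LL_S$-term depends on $b$ only through the first index of its leading factor --- is exactly the multi-variable extension of Lemma~\ref{l:exists_2_non_dist_elems}, and your justification via the product formula, the normalization $p_{1\alpha}=1$, and the equality $p_{i\lambda}=p_{i\mu}$ is sound. The ensuing classification of single-equation traces on $C$ (all/empty/slab/diagonal) and the witnessing points against each nontrivial shape are routine and correct, and the row case is indeed symmetric.

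The paper proceeds differently. Rather than invoking Theorem~\ref{th:about_M} and $\M_{sem}\subseteq S^4$, it works in two variables with the tailored set $\M=\{(x,y)\mid x=(\lambda,1,1)\text{ or }y=(\lambda,1,1)\}\subseteq S^2$, picks a hypothetical equation $t(x,y)=s(x,y)$ excluding the bad point $((\mu,1,1),(\mu,1,1))$, and runs a direct case analysis on which of $t,s$ begins with a constant, a variable $x$, or a variable $y$, each time applying Lemma~\ref{l:exists_2_non_dist_elems} to force a contradiction with some point of $\M$. Your route is more uniform: instead of handling the constant/variable start cases ad~hoc, you encapsulate them in a single classification of solution shapes on $C$ and dispose of all shapes at once with explicit witnesses. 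The paper's route, on the other hand, stays in $S^2$, avoids the four-variable criterion, and is closer in spirit to the original Lemma~\ref{l:exists_2_non_dist_elems}. Both rely on the same computation at heart --- interior variable occurrences cannot distinguish $s_1$ from $s_2$ --- so the difference is organizational rather than mathematical.
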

\begin{proof}
Assume that $S$ is an e.d. with a singular matrix $\P$ which has equal columns with numbers $\lambda,\mu$ (similarly , one can consider a matrix $\P$ with equal rows). 

Let $\Ss(x,y)$ be a system with the solution set $\M=\{(x,y)|x=(\lambda,1,1)\mbox{ or }y=(\lambda,1,1)\}$, and $t(x,y)=s(x,y)$ is an equation of $\Ss$ such that $t((\mu,1,1),(\mu,1,1))\neq s((\mu,1,1),(\mu,1,1))$. 

Assume that the terms $t(x,(\mu,1,1))$, $s(x,(\mu,1,1))$ do not begin with the variable $x$. Hence, Lemma~\ref{l:exists_2_non_dist_elems} gives the equalities
\[
t((\lambda,1,1),(\mu,1,1))=t((\mu,1,1),(\mu,1,1)),\; s((\lambda,1,1),(\mu,1,1))=s((\mu,1,1),(\mu,1,1)).
\]
As 
\[
t((1,\lambda,1),(\mu,1,1))=s((\lambda,1,1),(\mu,1,1)),
\]
we have 
\[
t((1,\mu,1),(\mu,1,1))=s((\mu,1,1),(\mu,1,1)),
\]
that contradicts with the choice of the equation $t(x,y)=s(x,y)$.

Assume now that the both terms $t(x,(\mu,1,1))$, $s(x,(\mu,1,1))$ begin with $x$. Using Lemma~\ref{l:exists_2_non_dist_elems}, we obtain
\[
t((\lambda,1,1),(\mu,1,1))=(\lambda,g,k),\;t((\mu,1,1),(\mu,1,1))=(\mu,g,k).
\]
Since $((\lambda,1,1),(\mu,1,1))\in \M$, then $s((\lambda,1,1),(\mu,1,1)=(\lambda,g,k)$. According Lemma~\ref{l:exists_2_non_dist_elems}, the values of the term $s(x,(\mu,1,1))$ are
\[
s((\lambda,1,1),(\mu,1,1))=(\lambda,g,k),\;s((\mu,1,1),(\mu,1,1))=(\mu,g,k).
\]
Therefore,
\[
t((1,\mu,1),(\mu,1,1))=s((\mu,1,1),(\mu,1,1)),
\]
that contradicts with the choice of the equation $t(x,y)=s(x,y)$.

Thus, the last case is: the term $t(x,(\mu,1,1))$ is $t(x,(\mu,1,1))=xt^\pr(x,(\mu,1,1))$, and $s(x,(\mu,1,1))$ begins with a constant $\c$. We have exactly two possibilities:
\begin{enumerate}
\item the constant $\c$ was obtained by by the substitution of the element $(\mu,1,1)$ instead of the variable $y$; in other words, the term $s(x,y)$ is the expression $ys^\pr(x,y)$;
\item the constant $\c$ occurs in $s(x,y)$, i.e. $s(x,y)=\c s^\pr(x,y)$.
\end{enumerate}  
Let us show that the both cases above are impossible.
\begin{enumerate}
\item The equation $xt^\pr(x,y)=ys^\pr(x,y)$ does not satisfy the point $((\lambda,1,1),(\mu,1,1))\in\M$, as the element $t((\lambda,1,1),(\mu,1,1))$ has the first index $\lambda$, but the first index of the element $s((\lambda,1,1),(\mu,1,1))$ is $\mu$.
\item Let $\c=(\nu,g,k)$, hence the equation $xt^\pr(x,y)=\c s^\pr(x,y)$ does not satisfy either $((\lambda,1,1),(\lambda,1,1))\in\M$ (if $\nu\neq\lambda$) or $((\mu,1,1),(\lambda,1,1))\in\M$ (if $\nu\neq\mu$).
\end{enumerate}
\end{proof}

\begin{lemma}
\label{l:about_equiv_over_Gamma}
Let $S=(G,\P.\Lambda,I)$ be a finite simple semigroup, and $x,y\in\Gamma$. Hence
\begin{enumerate}
\item $x(\lambda,c,i)y=x(1,c,1)y$;
\item if an equation 
\begin{equation}
(\lambda,c,i)t(x,y)=(\lambda^\pr,c^\pr,i^\pr)t^\pr(x,y)
\label{eq:rrrrr}
\end{equation}
is consistent over $S$, then it is equivalent to   
\[
(1,c,1)t(x,y)=(1,c^\pr,1)t^\pr(x,y)
\]
over the group $\Gamma$;
\item is an equation 
\begin{equation*}
t(x,y)(\lambda,c,i)=t^\pr(x,y)(\lambda^\pr,c^\pr,i^\pr)
\end{equation*}
is consistent over $S$, then it is equivalent to    
\[
t(x,y)(1,c,1)=t^\pr(x,y)(1,c^\pr,1)
\]
over the group $\Gamma$.
\end{enumerate}
\end{lemma}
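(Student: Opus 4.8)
The plan is to prove all three parts by direct computation with the Rees multiplication $(\lambda,g,i)(\mu,h,j)=(\lambda,gp_{i\mu}h,j)$, exploiting the normalization of $\P$: since the first row and the first column consist of $1\in G$, we have $p_{1\mu}=p_{i1}=1$ for all $i,\mu$. The recurring mechanism is that every factor coming from $\Gamma$ has first index $1$ and second index $1$, so whenever such a factor is adjacent to another factor the corresponding $\P$-entry lands in the first row or the first column and collapses to $1$. For part 1 I would substitute $x=(1,a,1)$, $y=(1,b,1)$ and multiply left to right: the entries $p_{1\lambda}$ (first row) and $p_{i1}$ (first column) are both $1$, so $x(\lambda,c,i)y=(1,acb,1)$, and the same computation with $(1,c,1)$ in place of $(\lambda,c,i)$ gives the identical result, since the first and second indices of the middle constant never enter.

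Before parts 2 and 3 I would record the following generalization of part 1: if $t(x,y)$ begins with a variable, then for $x,y\in\Gamma$ one has $t(x,y)=(1,w(x,y),\beta)$, where $\beta$ is the fixed second index of the last factor of $t$ and $w(x,y)\in G$ is the word obtained by reading off, left to right, the $G$-components of the variables and constants of $t$. Indeed, after merging consecutive constants we may assume no two adjacent factors are both constants; then in each internal product $\ldots p_{\nu_j\mu_{j+1}}\ldots$ at least one neighbour is a variable, so $\nu_j=1$ or $\mu_{j+1}=1$ and that entry is $1$. Thus all internal matrix entries vanish and only the naive group product $w$ survives. Here I assume, as the notation $(\lambda,c,i)t(x,y)$ indicates, that the leading constant has been fully extracted, so $t$ begins with a variable; this hypothesis is genuinely needed, since a constant sitting between $(\lambda,c,i)$ and the first variable would leave a surviving factor $p_{i\nu}$ that the reduced equation cannot reproduce.

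With this observation part 2 is short. Writing $t(x,y)=(1,w,\beta)$ and $t'(x,y)=(1,w',\beta')$, the left-hand side is $(\lambda,c,i)(1,w,\beta)=(\lambda,cp_{i1}w,\beta)=(\lambda,cw,\beta)$ and the right-hand side is $(\lambda',c'w',\beta')$. Consistency forces the fixed indices to agree, $\lambda=\lambda'$ and $\beta=\beta'$, after which the equation holds exactly when $c\,w(x,y)=c'\,w'(x,y)$ in $G$. The same computation for the reduced equation, using $p_{11}=1$, gives $(1,cw,\beta)$ and $(1,c'w',\beta')$, hence the very same condition $c\,w(x,y)=c'\,w'(x,y)$; so the two equations have the same solution set in $\Gamma$ and are equivalent over $\Gamma$. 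Part 3 is the mirror image: the trailing constant is peeled off, $t,t'$ end with a variable, the vanishing boundary entry is $p_{1\lambda}=1$ (first row), consistency pins down the leading index and $i=i'$, and both equations reduce to $w(x,y)\,c=w'(x,y)\,c'$ over $\Gamma$.

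The main obstacle is not any single computation but the bookkeeping of indices: one must verify that the boundary entry vanishes ($p_{i1}$ on the left in part 2, $p_{1\lambda}$ on the right in part 3), that the surviving trailing (resp. leading) index is constant on $\Gamma$ so that consistency can fix it, and that the peeled-off constant is genuinely the outermost factor so that $t$ begins (resp. ends) with a variable. Once these index conditions are secured, the residual group equation is literally identical on the two sides, and the equivalence over $\Gamma$ is immediate.
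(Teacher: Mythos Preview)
Your proof is correct and follows essentially the same route as the paper: both of you reduce to the case where $t,t'$ begin with a variable, observe that for $x,y\in\Gamma$ the value $t(x,y)$ has the form $(1,g,\beta)$ (you make this more explicit via the ``all internal $\P$-entries collapse'' observation), and then check that both the original and the reduced equation boil down to the same group identity $cg=c'g'$.

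One small overstatement: consistency over $S$ forces $\lambda=\lambda'$ (the first index of each side is determined by the leading constant alone), but it does \emph{not} in general force $\beta=\beta'$. For instance, if $t$ ends in a constant with second index $2$ while $t'$ ends in a variable, then over $\Gamma$ one has $\beta=2\neq 1=\beta'$, yet the equation can still be consistent over $S$ by choosing the last variable with second index $2$. This does not damage your conclusion: when $\beta\neq\beta'$, both the original and the reduced equation have empty solution set in $\Gamma^2$, so they are trivially equivalent there. The paper sidesteps this by working only at points $(x,y)$ that already solve the equation, where the second indices necessarily coincide.
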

\begin{proof}
The proof of the first statement is straightforward using the equalities  $p_{1\lambda}=p_{i1}=1$ for all $\lambda\in\Lambda$, $i\in I$.

Let us prove the second statement. By the consistency of the equation~(\ref{eq:rrrrr}) we have $\lambda=\lambda^\pr$. Without loss of generality one can state that the terms $t(x,y), t^\pr(x,y)$ begin with a variable. Hence, the values of the terms $t(x,y), t^\pr(x,y)$ at a point 
\[
(x,y)\in\Gamma^2\cap\V_S((\lambda,c,i)t(x,y)=(\lambda^\pr,c^\pr,i^\pr)t^\pr(x,y))
,
\]
equal $(1,g,j)$, $(1,g^\pr,j)$ respectively. Thus, we have  $(\lambda,c,i)(1,g,j)=(\lambda,c^\pr,i)(1,g^\pr,j)$, that equivalent to $(\lambda,cg,j)=(\lambda,c^\pr g^\pr,j)$, and therefore $cg=c^\pr g^\pr$.

If we put  $\lambda=1$, $i=i^\pr=1$, it is easy to see that the point $(x,y)$ remains the solution of the equation~(\ref{eq:rrrrr}).

The third statement of the lemma is similar to the second one.
  
\end{proof}

\begin{lemma}
\label{l:S-eq_dom->G-eq_dom}
If a finite simple semigroup $S=(G,\P,\Lambda,I)$ is an e.d. in the language $\LL_S$, then the group $G$ is an e.d. in the group language $\LL_G$. 
\end{lemma}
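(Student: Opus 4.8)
The plan is to play the two algebraicity criteria against each other. By Theorem~\ref{th:about_M} the hypothesis that $S$ is an e.d.\ means exactly that $\M_{sem}\subseteq S^4$ is algebraic over $\LL_S$, while by Theorem~\ref{th:criterion_for_groups} it suffices to prove that $\M_{gr}\subseteq G^2$ is algebraic over $\LL_G$. I would carry this out inside the copy $\Gamma\cong G$ living in $S$, whose identity is the idempotent $e=(1,1,1)$, manufacturing a group system for $\M_{gr}$ out of a semigroup system for $\M_{sem}$.

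First I would fix a system $\Ss(x_1,x_2,x_3,x_4)$ over $\LL_S$ with $\V_S(\Ss)=\M_{sem}$ and specialize it by substituting the constant $e$ for the variables $x_2$ and $x_4$. This produces a system $\Ss'(x_1,x_3)$ over $\LL_S$ whose solution set in $S^2$ is
\[
N=\{(x_1,x_3)\in S^2\mid x_1=e\ \text{or}\ x_3=e\}.
\]
Reading $\Ss'$ only over $\Gamma$ picks out $N\cap\Gamma^2=\{(a,b)\in\Gamma^2\mid a=e\ \text{or}\ b=e\}$, which under the isomorphism $\Gamma\cong G$ (carrying $e$ to $1$) is precisely $\M_{gr}$. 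So everything reduces to converting ``$\Ss'$ restricted to $\Gamma^2$'' into a genuine system over the group language $\LL_G$.

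The heart of the matter, and the main obstacle, is that $\Ss'$ is still only a semigroup system: its constants may lie outside $\Gamma$, and for inputs in $\Gamma^2$ its terms may take values outside $\Gamma$ — this is exactly what happens when a side begins, respectively ends, with a constant whose first, respectively second, index differs from $1$. This is what Lemma~\ref{l:about_equiv_over_Gamma} is designed to defeat. I would first note that every equation of $\Ss'$ holds at the point $(e,e)\in\Gamma^2$ (since $e=e$), so each is consistent over $S$. For each equation I would then use parts 2 and 3 of the lemma to normalize the leading and trailing constants to the form $(1,c,1)$ (when only one side begins, or ends, with a constant, consistency forces its relevant index to be $1$, and it is absorbed directly via $p_{i1}=p_{1\lambda}=1$), and use part 1 to replace every internal constant $(\lambda,c,i)$ by $(1,c,1)$. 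Each rewritten equation lies entirely inside $\Gamma$ and, by the lemma, has the same solutions in $\Gamma^2$ as the original. Carrying this out for every equation yields a system $\hat\Ss$ all of whose terms are products of variables in positive powers and constants from $\Gamma$, with the same $\Gamma^2$-solutions as $\Ss'$.

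Finally I would transport $\hat\Ss$ through the isomorphism $\Gamma\cong G$: it becomes a system over $\LL_G$ (positive powers being a special case of $\LL_G$-terms) whose solution set in $G^2$ is the image of $N\cap\Gamma^2$, namely $\M_{gr}$. Thus $\M_{gr}$ is algebraic over $\LL_G$, and Theorem~\ref{th:criterion_for_groups} yields that $G$ is an e.d. The only delicate point I expect to need care with is the index bookkeeping in the previous paragraph, i.e.\ checking that consistency at $(e,e)$ really licenses the normalizations of Lemma~\ref{l:about_equiv_over_Gamma} in every positional case (both sides constant-led, one side variable-led, and the symmetric trailing cases).
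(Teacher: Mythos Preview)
Your proposal is correct and follows essentially the same route as the paper: obtain an $\LL_S$-system defining $N=\{(x,y)\in S^2\mid x=e\text{ or }y=e\}$, use Lemma~\ref{l:about_equiv_over_Gamma} to rewrite every constant as an element of $\Gamma$ without changing the $\Gamma^2$-solutions, and then transport through $\Gamma\cong G$ to invoke Theorem~\ref{th:criterion_for_groups}. The only cosmetic differences are that the paper obtains the system for $N$ directly from the definition of an e.d.\ (rather than by specializing a system for $\M_{sem}$), and disposes of the mixed constant-led/variable-led case by observing that such an equation would already fail at a point of $N$ whose relevant index differs from $\lambda$, rather than by your normalization via consistency at $(e,e)$.
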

\begin{proof}
As $S$ is an e.d., the set $\M=\{(x,y)|x=(1,1,1)\mbox{ or }y=(1,1,1)\}\subseteq S^2$ is algebraic over $S$. In other words, there exists a system $\Ss$ over $\LL_S$ with $\V_S(\Ss)=\M$. 

If we assume that there is $(\lambda,g,i)t^\pr(x,y)=xs^\pr(x,y)\in\Ss$, then this equation does not satisfy the point $((\mu,h,j),(1,1,1))\in\M$, where $\mu\neq\lambda$.

Thus, for any equation from $\Ss$  one can apply the formulas from Lemma~\ref{l:about_equiv_over_Gamma}, and obtain a system $\Ss^\pr$ whose constants belong to the group $\Gamma$. Moreover, the system $\Ss^\pr$ is equivalent to $\Ss$ over the group $\Gamma$.

Finally, we have $\V_\Gamma(\Ss^\pr)=\{(x,y)|x=(1,1,1)\mbox{ or }y=(1,1,1)\}\subseteq\Gamma^2$, and, by Theorem~\ref{th:criterion_for_groups}, the group $\Gamma$ is an equational domain in the language  $\LL_\Gamma$. The isomorphism between the groups $\Gamma,G$ proves the lemma.
\end{proof}

Let $P=(p_1,p_2,\ldots,p_n)\in S^n$. By $\T_P(M,\Gamma)$ (where $P\in M\subseteq S^n$) denote the set of all terms $t(X)\in\T(S^n,\Gamma)$ such that $t(P)\neq(1,1,1)$, $t(Q)=(1,1,1)$ for all $Q\in M\setminus\{P\}$.

\begin{lemma}
\label{l:sufficient_conditions}
Let $S=(G,\P,\Lambda,I)$ be a finite simple semigroup, $\P$ is nonsingular, and $G$ is an e.d. in the language $\LL_G$. Then for any natural $n$ and any point $P=(p_1,p_2,\ldots,p_n)\in S^n$ the set $\T_P(S^n,\Gamma)$ 
\begin{enumerate}
\item is nonempty;
\item contains all terms of the form $(1,g,1)t(X)(1,g^{-1},1)$, $g\in G$ if $t(X)\in \T_P(S^n,\Gamma)$.
\end{enumerate}
\end{lemma}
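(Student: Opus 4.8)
The plan is to treat the two assertions separately; the second is a one-line computation, while the first is the substantial point and is where both hypotheses (nonsingularity of $\P$ and the assumption that $G$ is an e.d.) enter. For the second assertion, given $t(X)\in\T_P(S^n,\Gamma)$ I would set $u(X)=(1,g,1)\,t(X)\,(1,g^{-1},1)$ and simply evaluate. If $t(Q)=(1,h,1)\in\Gamma$, then, using that $\Gamma\cong G$ has identity $(1,1,1)$, one gets $u(Q)=(1,ghg^{-1},1)\in\Gamma$, so $u(X)\in\T(S^n,\Gamma)$. Since conjugation by $g$ is a bijection of $G$ fixing $1$, the value $u(Q)$ is trivial exactly when $t(Q)$ is; hence $u(P)\neq(1,1,1)$ and $u(Q)=(1,1,1)$ for $Q\neq P$, i.e. $u(X)\in\T_P(S^n,\Gamma)$.

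For nonemptiness I would construct an explicit ``indicator'' term for $P$. First I exploit nonsingularity: for every $Q\in S^n\setminus\{P\}$ pick a coordinate $k$ with $p_k\neq q_k$ and, by Lemma~\ref{l:exists_dist_term}, a term in $\T(S,\Gamma)$ separating $p_k$ and $q_k$; read as a term in the variables $X$ it lies in $\T(S^n,\Gamma)$. Collecting one such term for each $Q$ produces finitely many $t_1,\dots,t_m\in\T(S^n,\Gamma)$ for which the map $T=(t_1,\dots,t_m)\colon S^n\to\Gamma^m$ satisfies $T(Q)\neq T(P)$ for every $Q\neq P$; that is, $T$ isolates $P$.

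Next I would use that $G$ (equivalently $\Gamma$) is a finite e.d.\ to produce a single word detecting the value $T(P)$. Identifying $\Gamma^m$ with $G^m$, single points of $G^m$ are algebraic and finite unions of algebraic sets are algebraic, so, $G$ being finite, every subset of $G^m$ is algebraic; in particular $G^m\setminus\{T(P)\}$ is the solution set of some system over $\LL_G$. Taking an equation $\phi(Y)=1$ of that system which fails at $T(P)$, the word $\phi\in F(Y)\ast G$ satisfies $\phi(T(P))\neq 1$ and $\phi(c)=1$ for all $c\neq T(P)$. Because every value substituted for a variable will lie in the finite group $\Gamma$, I can replace each negative power $y_i^{-1}$ in $\phi$ by $y_i^{|\Gamma|-1}$ and each group constant $g$ by $(1,g,1)$, turning $\phi$ into a positive word $\tilde\phi$ inducing the same function on $\Gamma^m$. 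Substituting $y_i\mapsto t_i(X)$ then yields a genuine $\LL_S$-term $t_P(X)=\tilde\phi\big(t_1(X),\dots,t_m(X)\big)$. Since each $t_i$ takes values in $\Gamma$ and $\Gamma$ is closed under products and positive powers and contains the constants $(1,g,1)$, the value $t_P(Q)$ is computed inside $\Gamma$ and equals $\phi$ evaluated at $T(Q)$; hence $t_P(X)\in\T(S^n,\Gamma)$ and $t_P(Q)=(1,1,1)$ iff $\phi(T(Q))=1$ iff $T(Q)\neq T(P)$ iff $Q\neq P$. Thus $t_P\in\T_P(S^n,\Gamma)$, which proves nonemptiness.

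The routine parts are the evaluations inside $\Gamma$; the step I expect to be most delicate is the passage from the group word $\phi$ over $G$ to the semigroup term $t_P$: one must verify that eliminating inverses via $y^{-1}=y^{|\Gamma|-1}$ and substituting the $\Gamma$-valued terms $t_i$ yields a legitimate $\LL_S$-term whose values never leave $\Gamma$ and realize exactly the function $\phi\circ T$. It is precisely here that the finiteness of $G$ and the e.d.\ hypothesis on $G$ are indispensable, whereas nonsingularity is what guarantees the isolating family $t_1,\dots,t_m$ exists.
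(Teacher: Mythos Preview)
Your proof is correct. The treatment of property~(2) is the same one-line conjugation check as in the paper. For property~(1) the two arguments genuinely diverge.

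The paper argues by induction on $|M|$ that $\T_P(M,\Gamma)\neq\emptyset$ for every finite $M\ni P$, ending with $M=S^n$. The base case $|M|=2$ uses Lemma~\ref{l:exists_dist_term} just as you do. In the inductive step one takes, for $M=\{P,Q_1,\dots,Q_m\}$, indicator terms $t\in\T_P(M\setminus\{Q_1\},\Gamma)$ and $s\in\T_P(M\setminus\{Q_2\},\Gamma)$, uses property~(2) to replace $s$ by a conjugate so that the values $t(P),s(P)$ fail to commute---this is precisely where the zero-divisor criterion (Theorem~\ref{th:zero_divisors}) for e.d.'s is invoked---and then sets $p(X)=[t(X),s(X)]$, which lies in $\T_P(M,\Gamma)$.

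Your route avoids the induction and the commutator trick entirely: you use Lemma~\ref{l:exists_dist_term} once for each $Q\neq P$ to build a $P$-isolating map $T\colon S^n\to\Gamma^m$, then invoke the e.d.\ hypothesis on $G$ in its ``every subset of $G^m$ is algebraic'' form to extract a single group word $\phi$ detecting $T(P)$, and finally push $\phi$ back to an $\LL_S$-term via $y^{-1}\mapsto y^{|G|-1}$ and $y_i\mapsto t_i(X)$. The delicate step you flag---that this substitution yields a genuine $\LL_S$-term whose values stay in $\Gamma$ and realize $\phi\circ T$---is handled correctly, since every $t_i$ is $\Gamma$-valued and $\Gamma$ is closed under products and contains all constants $(1,g,1)$.

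The paper's version is more explicitly constructive (the indicator is an iterated commutator of separating terms) and pinpoints exactly where the absence of zero-divisors matters. Your version is shorter and separates the two hypotheses more cleanly: nonsingularity produces the isolating family, and the e.d.\ property of $G$ is consumed in a single black-box application rather than threaded through an induction.
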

\begin{proof}
The second property follows from the first one: let $t(X)\in\T_P(S^n,\Gamma)$, hence
\[
(1,g,1)t(Q)(1,g^{-1},1)=(1,g,1)(1,1,1)(1,g^{-1},1)=(1,g1g^{-1},1)=(1,1,1),
\]
\[
(1,g,1)t(P)(1,g^{-1},1)=(1,g,1)(1,h,1)(1,g^{-1},1)=(1,ghg^{-1},1)\neq(1,1,1),\mbox{ since }h\neq 1.
\]

Let us prove $\T_P(S^n,\Gamma)\neq\emptyset$.

Further we shall use the denotation:
\[t^{-1}(X)=t^{|G|-1}(X).\]
Obviously, that for any term $t(X)\in\T(S^n,\Gamma)$ it holds $t^{-1}(X)\in\T(S^n,\Gamma)$, and
\[
t(X)t^{-1}(X)=t^{-1}(X)t(X)=t^{|G|}(X)=(1,1,1)\mbox{ for all }X\in S^n.
\]

We prove $\T_P(M,\Gamma)\neq\emptyset$ by the induction on the cardinality of the set $M\subseteq S^n$. Let $|M|=2$, and $P,Q\subseteq S^n$ two distinct points of the set $M$.

Without loss of generality one can assume that the points $P,Q$ have the distinct first coordinates $p_1\neq q_1$. By Lemma~\ref{l:exists_dist_term}, there exists a term $t(x)\in\T(S,\Gamma)$ with $t(p_1)\neq t(q_1)$. Let $s(X)=t(x_1)t^{-1}(q_1)\in\T(S,\Gamma)$, and we have $s(P)=t(p_1)t^{-1}(q_1)\neq (1,1,1)$, $s(Q)=t(q_1)t^{-1}(q_1)=(1,1,1)$. Thus, $s(X)\in \T_P(M,\Gamma)$.  

Suppose that for any set $M$ with $|M|\leq m$ the statement of the lemma is proved. Let us prove the lemma for a set $M$ with $m+1$ elements.

Let $M=\{P,Q_1,Q_2,\ldots,Q_m\}$. By the assumption of the induction, there exist terms
\[t(X)\in\T_P(\{P,Q_2,Q_3,\ldots,Q_m\},\Gamma),s(X)\in\T_P(\{P,Q_1,Q_3,\ldots,Q_m\},\Gamma),\]
with values

\begin{tabular}{ccccccc}
&$P$&$Q_1$&$Q_2$&$Q_3$&$\ldots$&$Q_m$\\
$t(X)$&$(1,g_1,1)$&$(1,h_1,1)$&$(1,1,1)$&$(1,1,1)$&$\ldots$&$(1,1,1)$\\
$s(X)$&$(1,g_2,1)$&$(1,1,1)$&$(1,h_2,1)$&$(1,1,1)$&$\ldots$&$(1,1,1)$
\end{tabular} 

One can choose the elements $g_1,g_2\in G$ which do not commute. Indeed, the second property of the set $\T_P(M,\Gamma)$ allows us to take $g_2$ from the conjugacy class $C=\{gg_2g^{-1}|g\in G\}$. If $g_1$ commutes with all elements of $C$ then $g_1$ is a zero-divisor in the group $G$, and, by Theorem~\ref{th:zero_divisors}, $G$ is not an e.d. that contradicts with the conditions of the lemma.   

The values of the term
\[p(X)=[t(X),s(X)]=t^{-1}(X)s^{-1}(X)t(X)s(X)\in\T(S,\Gamma),
\]
are

\begin{tabular}{ccccccc}
&$P$&$Q_1$&$Q_2$&$Q_3$&$\ldots$&$Q_m$\\
$p(X)$&$(1,[g_1,g_2],1)$&$(1,1,1)$&$(1,1,1)$&$(1,1,1)$&$\ldots$&$(1,1,1)$
\end{tabular} 

where $[g_1,g_2]=g_1^{-1}g_2^{-1}g_1g_2\neq 1$ is the commutator of the elements $g_1,g_2$ in the group $G$.
 
Thus, $p(X)\in\T_P(M,\Gamma)$, and we have proved the lemma.
\end{proof}

\begin{theorem}
\label{th:main}
A finite simple semigroup $S=(G,\P,\Lambda,I)$ is an e.d. in the language $\LL_S$ iff the next two conditions hold:
\begin{enumerate}
\item $\P$ is nonsingular; 
\item $G$ is an e.d. in the group language $\LL_G$. 
\end{enumerate}
\end{theorem}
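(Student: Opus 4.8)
The plan is to prove the two implications separately; the necessity is essentially a restatement of two earlier lemmas, while the sufficiency is where the actual construction happens.

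For necessity, suppose $S$ is an e.d. First I would note that $\P$ must be nonsingular: otherwise Lemma~\ref{l:singular->not_ED} would force $S$ not to be an e.d., contradicting the hypothesis. Next, Lemma~\ref{l:S-eq_dom->G-eq_dom} gives directly that the group $G$ is an e.d. in $\LL_G$. So this direction is just the conjunction of the two lemmas and requires no new work.

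For sufficiency, assume $\P$ is nonsingular and $G$ is an e.d. By the Proposition reducing the e.d. property of a finite semigroup to the algebraicity of every subset $M\subseteq S^n$, it suffices to produce, for each $n$ and each $M\subseteq S^n$, a system whose solution set is exactly $M$. The engine is Lemma~\ref{l:sufficient_conditions}: under our hypotheses $\T_R(S^n,\Gamma)\neq\emptyset$ for every $R\in S^n$, so I may fix a term $t_R(X)$ satisfying $t_R(R)\neq(1,1,1)$ and $t_R(Q)=(1,1,1)$ for all $Q\neq R$. Thus the single equation $t_R(X)=(1,1,1)$ holds at a point $Q$ precisely when $Q\neq R$; it is an indicator for the complement of $\{R\}$.

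The decisive step --- and the only one that is not bookkeeping --- is the observation that membership in $M$, which is a disjunction, can be rewritten as a conjunction over the (finite, since $S$ is finite) complement $S^n\setminus M=\{R_1,\dots,R_l\}$: a point $Q$ lies in $M$ iff $Q\neq R_j$ for every $j$, equivalently iff $t_{R_j}(Q)=(1,1,1)$ for all $j$. Hence the finite system
\[
\Ss=\{\,t_R(X)=(1,1,1)\mid R\in S^n\setminus M\,\}
\]
has solution set exactly $M$, so $M$ is algebraic; since $M$ was arbitrary, the Proposition yields that $S$ is an e.d. I expect the genuine difficulty to have been front-loaded into Lemma~\ref{l:sufficient_conditions}, where nonsingularity and the zero-divisor-free group are actually used; at the level of the theorem the remaining points are routine --- checking that each $t_R(X)=(1,1,1)$ is a legitimate $\LL_S$-equation, since $(1,1,1)\in S$ and $t_R\in\T(S^n,\Gamma)$, and noting that finiteness keeps the system finite.
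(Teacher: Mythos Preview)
Your proof is correct and follows essentially the same approach as the paper: necessity via Lemmas~\ref{l:singular->not_ED} and~\ref{l:S-eq_dom->G-eq_dom}, and sufficiency via the terms supplied by Lemma~\ref{l:sufficient_conditions} assembled into a system of equations $t_R(X)=(1,1,1)$ over the complement. The only difference is cosmetic: the paper invokes Theorem~\ref{th:about_M} and so only checks that the single set $\M_{sem}\subseteq S^4$ is algebraic, whereas you use the Proposition and show directly that \emph{every} $M\subseteq S^n$ is algebraic---your version is in fact what the paper records separately as the Corollary immediately following the theorem.
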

\begin{proof}
The non-singularity of the matrix $\P$ follows from Lemma~\ref{l:singular->not_ED}, and Lemma~\ref{l:S-eq_dom->G-eq_dom} states that $G$ is an e.d. in the language $\LL_G$.

Prove the converse. Consider the set $\M_{sem}=\{(x_1,x_2,x_3,x_4)|x_1=x_2\mbox{ or }x_3=x_4\}\subseteq S^4$. By Lemma~\ref{l:sufficient_conditions} for any point $P\notin \M_{sem}$ there exists a term $t_P(x_1,x_2,x_3,x_4)\in\T_P(S^4,\Gamma)$. It is clear that the solution set of the system $\Ss=\{t_P(x_1,x_2,x_3,x_4)=(1,1,1)|P\notin \M_{sem}\}$ equals $\M_{sem}$, hence $\M_{sem}$ is algebraic. By Theorem~\ref{th:about_M}, the semigroup $S$ is an e.d. in the language $\LL_S$.  
\end{proof}

\begin{corollary}
Suppose a finite simple semigroup $S$ is an e.d. in the language $\LL_S$. Then any nonempty set $M\subseteq S^n$ equals to the solution set of a system $\Ss=\{t_i(X)=(1,1,1)|1\leq i\leq m\}$, where $t_i(X)\in\T(S^n,\Gamma)$, $m=|S|^n-|M|$.  
\end{corollary}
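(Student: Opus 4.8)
The plan is to reduce the corollary directly to Lemma~\ref{l:sufficient_conditions}, reusing the ``exclude one forbidden point at a time'' construction that already appears in the proof of Theorem~\ref{th:main}. Since $S$ is an e.d., Theorem~\ref{th:main} tells me that $\P$ is nonsingular and that $G$ is an e.d. in the language $\LL_G$; these are precisely the two hypotheses required to invoke Lemma~\ref{l:sufficient_conditions}, so that lemma is available for every point of $S^n$.

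First I would enumerate the complement $S^n\setminus M=\{P_1,P_2,\ldots,P_m\}$, observing that $m=|S^n|-|M|=|S|^n-|M|$, which already accounts for the claimed number of equations. For each $P_j$ I apply Lemma~\ref{l:sufficient_conditions} with the full cube $S^n$ playing the role of $M$ in the definition of $\T_{P_j}$, obtaining a term $t_j(X):=t_{P_j}(X)\in\T_{P_j}(S^n,\Gamma)\subseteq\T(S^n,\Gamma)$ with $t_j(P_j)\neq(1,1,1)$ and $t_j(Q)=(1,1,1)$ for every $Q\in S^n\setminus\{P_j\}$; that is, each $t_j$ equals the identity of $\Gamma$ at every point except $P_j$. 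I then set $\Ss=\{t_j(X)=(1,1,1)\mid 1\leq j\leq m\}$ and verify $\V_S(\Ss)=M$: if $Q\in M$ then $Q\neq P_j$ for all $j$, so every $t_j(Q)=(1,1,1)$ and $Q$ is a solution; conversely, if $Q\notin M$ then $Q=P_j$ for some $j$, and that equation fails because $t_j(Q)=t_{P_j}(P_j)\neq(1,1,1)$. Hence the solution set is exactly $M$, the terms all lie in $\T(S^n,\Gamma)$, and the count is $m=|S|^n-|M|$, as required.

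I do not expect a genuine obstacle here: all the real work is done by Lemma~\ref{l:sufficient_conditions}, which supplies for each excluded point a term vanishing (in the sense of equalling $(1,1,1)$) precisely off that point, and the remainder is a bookkeeping argument identical in spirit to the treatment of $\M_{sem}$ in Theorem~\ref{th:main}. The one detail worth pinning down is simply that each $t_j$ is a legitimate $\LL_S$-term giving a genuine equation $t_j(X)=(1,1,1)$, which is immediate from the inclusion $\T_{P_j}(S^n,\Gamma)\subseteq\T(S^n,\Gamma)$.
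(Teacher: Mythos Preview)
Your proposal is correct and follows essentially the same approach as the paper: enumerate the complement $S^n\setminus M$, apply Lemma~\ref{l:sufficient_conditions} to obtain for each excluded point a term in $\T_{P_j}(S^n,\Gamma)$ whose associated equation cuts out $S^n\setminus\{P_j\}$, and intersect. If anything, you are slightly more careful than the paper in explicitly invoking Theorem~\ref{th:main} to justify that the hypotheses of Lemma~\ref{l:sufficient_conditions} are met.
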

\begin{proof}
Suppose the set $S^n\setminus M$ consists of the points $P_1,P_2,\ldots,P_m$, where $m=|S|^n-|M|$. Following Lemma~\ref{l:sufficient_conditions}, there exist terms $t_i(X)\in\T_{P_i}(S^n,\Gamma)$ such that the solution set of the equation $t_i(X)=(1,1,1)$ is $S^n\setminus\{P_i\}$. Thus, the solution set of the system $\Ss=\{t_i(X)=(1,1,1)|1\leq i\leq m\}$ coincides with $M$.
\end{proof}

\begin{corollary}
A finite simple semigroup $S=(G,\P,\Lambda,I)$ is not an e.d. in the language $\LL_S$ if at least one of the following holds
\begin{enumerate}
\item $|G|^{|\Lambda|-1}<|I|$;
\item $|G|^{|I|-1}<|\Lambda|$;
\item $|\Lambda|=1$, $|I|>1$;
\item $|I|=1$, $|\Lambda|>1$;
\item $G=\{1\}$ and at least one of the numbers $|I|,|\Lambda|$ is more than $1$.
\end{enumerate}
\end{corollary}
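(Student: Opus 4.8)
The plan is to route everything through the non-singularity criterion of Theorem~\ref{th:main}. Since a singular matrix $\P$ alone already prevents $S$ from being an e.d. (Lemma~\ref{l:singular->not_ED}), it suffices to verify that each of the five listed conditions forces $\P$ to possess two equal rows or two equal columns. The whole corollary then reduces to a pigeonhole count on the normalized matrix, with the two basic conditions treated first and the remaining three recovered as degenerate instances.

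First I would bound the number of pairwise distinct columns of $\P$. A column lies in $G^{|I|}$, but because $\P$ is normalized its first row consists of identities, so the entry in row $1$ of every column is forced to equal $1\in G$. Hence a column is determined by its remaining $|I|-1$ coordinates, and at most $|G|^{|I|-1}$ distinct columns are available. If $|\Lambda|>|G|^{|I|-1}$ — which is exactly condition~2 — then among the $|\Lambda|$ columns two must coincide, so $\P$ is singular. Symmetrically, the normalized first column forces the first coordinate of every row to be $1$, leaving at most $|G|^{|\Lambda|-1}$ distinct rows; condition~1, namely $|G|^{|\Lambda|-1}<|I|$, then produces two equal rows by the same pigeonhole argument, and $\P$ is again singular.

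The remaining three conditions are degenerate specializations of the first two, obtained by letting an exponent collapse to $0$. For condition~3 one has $|\Lambda|=1$, so $|G|^{|\Lambda|-1}=|G|^0=1$, and $|I|>1$ is precisely the inequality $|G|^{|\Lambda|-1}<|I|$ of condition~1; dually, condition~4 ($|I|=1$, $|\Lambda|>1$) is condition~2 with the exponent $|I|-1$ collapsing to $0$. For condition~5 we have $|G|=1$, so both bounds $|G|^{|I|-1}$ and $|G|^{|\Lambda|-1}$ equal $1$, and whichever of $|I|,|\Lambda|$ exceeds $1$ triggers condition~1 or condition~2 respectively. In every case $\P$ is singular, and Lemma~\ref{l:singular->not_ED} concludes that $S$ is not an e.d.

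I do not expect a genuine obstacle here: the argument is an elementary counting reduction. The only point requiring care is the normalization bookkeeping — tracking why the exponents carry a $-1$, which comes from the fixed all-$1$ first row and first column — and checking that the three boundary cases really do fall under conditions~1 and~2 rather than leaving a gap. Once the pigeonhole bounds are stated cleanly, all five implications follow uniformly through singularity of $\P$.
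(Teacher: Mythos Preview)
Your proof is correct and follows essentially the same route as the paper: a pigeonhole count on the rows (resp.\ columns) of the normalized matrix $\P$, noting that the fixed first entry leaves only $|G|^{|\Lambda|-1}$ (resp.\ $|G|^{|I|-1}$) possibilities, and then invoking singularity of $\P$; the paper cites Theorem~\ref{th:main} rather than Lemma~\ref{l:singular->not_ED} directly, but this is immaterial. Your reduction of conditions~3--5 to conditions~1--2 via the collapsing exponents is slightly more explicit than the paper's one-line remark, but the content is the same.
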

\begin{proof}
The last three conditions follows from the first and second statements.

Let us prove the first statement (the proof of the second one is similar).

We find the relations between the numbers $|\Lambda|,|I|$ which guarantee two equal rows in the matrix $\P$. Remind that it implies the singularity of $\P$, and, by Theorem~\ref{th:main}, $S$ is not an e.d.

The number of different rows $(1,g_2,g_3,\ldots,g_{|\Lambda|})$ equals $|G|^{|\Lambda|-1}$ (we put $g_1=1$, since the first element of any row in $\P$ is $1$). Hence, in every matrix $\P$ with at least $|G|^{|\Lambda|-1}$ rows (i.e. $|G|^{|\Lambda|-1}<|I|$) there always exist two equal rows, and $\P$ becomes singular.
\end{proof}

\begin{example}
\label{ex:domain_240}
Consider a finite simple semigroup $S_{240}$ defined by the next parameters $(G,\P,\Lambda,I)$: $G=A_5$ (the alternating group of degree $5$), $\Lambda=I=\{1,2\}$,
\[\P=\begin{pmatrix}1&1\\1&g\end{pmatrix},\]
where $1\neq g\in A_5$. The order of  $S_{240}$ is $|S_{240}|=|A_5|\cdot 2\cdot 2=240$. As $A_5$ is simple non-abelian, by~\cite{uniTh_IV} it is an e.d.. The matrix $\P$ is non-singular, hence by Theorem~\ref{th:main}, the semigroup $S_{240}$ is an e.d. in the language $\LL_S$. According Corollary~\ref{cor:when_is_group}, $S_{240}$ is not a group, thus the semigroup $S_{240}$ solves the problem posed in the introduction. 
\end{example}

\section{Equational domains among non-simple semigroups}

Let $S=(G,\P,\Lambda,I)$ be a finite simple semigroup which is not a group (i.e. $|\Lambda|>1$ or $|I|>1$). It is easy to check that $S$ does not contain the identity element. By $S^\ast=S\cup\{\one\}$ we denote the semigroup (monoid) which is obtained from $S$ by the adjunction of the identity element $\one$: $\one s=s\one=s$ for all $s\in S$. Obviously, the set $S$ is an ideal in $S^\ast$, hence $S^\ast$ is non-simple.

Some of the results proven above for the semigroup $S$ remain true for $S^\ast$.

\begin{lemma}
\label{l:exists_dist_term_nonsimplenew}
Suppose the matrix $\P$ of a finite simple semigroup $S=(G,\P,\Lambda,I)$ is nonsingular. Hence, for any pair of distinct elements $s_1,s_2\in S^\ast$ there exists a term $t(x)\in \T(S^\ast,\Gamma)$ separating $s_1,s_2$.
\end{lemma}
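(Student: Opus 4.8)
The plan is to split on whether the adjoined identity $\one$ is one of the two elements, and to carry out both cases using a single family of sandwich terms. Set $t_{k,m}(x) = (1,1,k)\,x\,(m,1,1)$ for $k \in I$, $m \in \Lambda$. A direct multiplication, using the normalization $p_{1\lambda} = p_{i1} = 1$, shows that each $t_{k,m}$ sends every element of $S^\ast$ into $\Gamma$: an element $(\nu,h,j) \in S$ goes to $(1, p_{k\nu}h p_{jm}, 1)$, while $\one \mapsto (1,1,k)(m,1,1) = (1, p_{km}, 1)$. Hence every $t_{k,m}$ is a legitimate member of $\T(S^\ast, \Gamma)$, and it will be enough to separate each pair by a suitable $t_{k,m}$.

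Consider first the case $s_1, s_2 \in S$. Inspecting the proof of Lemma~\ref{l:exists_dist_term}, every separating term produced there is of the form $(1,1,1)x(1,1,1)$, $(1,1,k)x(1,1,1)$, or $(1,1,1)x(m,1,1)$ — that is, it is one of the $t_{k,m}$. By the previous paragraph such a term already lies in $\T(S^\ast, \Gamma)$, so Lemma~\ref{l:exists_dist_term} applies verbatim and this case needs no further work.

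The substantive case is $s_1 = (\lambda, g, i) \in S$ and $s_2 = \one$. Evaluating the family gives $t_{k,m}(s_1) = (1, p_{k\lambda}\, g\, p_{im}, 1)$ and $t_{k,m}(\one) = (1, p_{km}, 1)$, so $t_{k,m}$ fails to separate the pair exactly when $p_{k\lambda}\, g\, p_{im} = p_{km}$. I would argue by contradiction that this cannot hold for all $k \in I$, $m \in \Lambda$ at once. Putting $m = 1$ forces $p_{k\lambda}\, g = 1$ for every $k$, whence $g = 1$ (take $k=1$ and use $p_{1\lambda} = 1$) and then column $\lambda$ of $\P$ is identically $1$; putting $k = 1$ likewise forces row $i$ to be identically $1$.

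The crux is to convert these degeneracies into a contradiction. If $\lambda \neq 1$, then column $\lambda$ coincides with the first column, and if $i \neq 1$ then row $i$ coincides with the first row — either way $\P$ is singular, contrary to hypothesis. The one remaining possibility, $\lambda = i = 1$ (so $s_1 = (1,1,1)$, the identity of $\Gamma$), makes the assumed identities read $p_{km} = 1$ for all $k,m$, i.e.\ $\P$ is the all-ones matrix; since $S$ is not a group we have $|\Lambda| > 1$ or $|I| > 1$, and then $\P$ has two equal rows or two equal columns, again singular. Every branch contradicts nonsingularity, so some $t_{k,m}$ separates $s_1$ from $\one$. I expect this last subcase $s_1 = (1,1,1)$ to be the real obstacle: it is the unique element of $S$ that the naive term $(1,1,1)x(1,1,1)$ cannot distinguish from $\one$, and eliminating it is precisely where both the nonsingularity of $\P$ and the assumption that $S$ is not a group are genuinely needed.
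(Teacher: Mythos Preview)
Your argument is correct and is essentially the paper's own proof, just recast as a contradiction instead of an explicit four-case construction. The paper uses exactly the same family $t_{k,m}(x)=(1,1,k)x(m,1,1)$ and treats the cases $g\neq 1$, then $g=1,i\neq 1$, then $g=1,i=1,\lambda\neq 1$, and finally $s_1=(1,1,1)$ by exhibiting a specific $(k,m)$ in each; your contrapositive recovers precisely these cases (setting $k=m=1$ gives $g=1$, setting $m=1$ forces column $\lambda$ constant, setting $k=1$ forces row $i$ constant, and the residual $\lambda=i=1$ forces $\P$ to be all ones), and both arguments invoke ``$S$ is not a group'' at the same point to rule out the all-ones matrix.
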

\begin{proof}
If the both elements $s_1,s_2$ belong to $S$ one can use the proof of Lemma~\ref{l:exists_dist_term}. Let now $s_1=\one$, $s_2=(\lambda,g,i)\in S$.

Consider the following cases.
\begin{enumerate}
\item Suppose $g\neq 1$, hence the elements $\one,(\lambda,g,i)$ are separated by the term $t(x)=(1,1,1)x(1,1,1)$:
\[
(1,1,1)\one(1,1,1)=(1,1,1)(1,1,1)=(1,1,1),
\]
\[
(1,1,1)(\lambda,g,i)(1,1,1)=(1,g,i)(1,1,1)=(1,g,1).
\] 
\item Suppose $g=1$ and $i\neq 1$. The non-singularity of $\P$ implies the existence if an index $\mu\in\Lambda$ such that $p_{i\mu}\neq 1$. Finally, the elements $\one,(\lambda,1,i)$ are separated by $t(x)=(1,1,1)x(\mu,1,1)$:
\[
(1,1,1)\one(\mu,1,1)=(1,1,1)(\mu,1,1)=(1,1,1),\\
\]
\[
(1,1,1)(\lambda,1,i)(\mu,1,1)=(1,1,i)(\mu,1,1)=(1,p_{i\mu},1)
\]
\item Assume $g=1$, $i=1$, $\lambda\neq 1$. As $\P$ is nonsingular, there exists an index $j\in I$ with $p_{j\lambda}\neq 1$. Hence, one can separate the elements $\one,(\lambda,1,1)$ by the term $t(x)=(1,1,j)x(1,1,1)$:
\[
(1,1,j)\one(1,1,1)=(1,1,j)(1,1,1)=(1,1,1),
\]
\[
(1,1,j)(\lambda,1,1)(1,1,1)=(1,p_{j\lambda},1)(1,1,1)=(1,p_{j\lambda},1).
\]
\item Finally, $s_2=(1,1,1)$. The non-singularity of $\P$ gives us the indexes $j\in I$, $\mu\in\Lambda$ such that $p_{j\mu}\neq 1$.  Thus, the elements $\one,(1,1,1)$ can be distinguished by $t(x)=(1,1,j)x(\mu,1,1)$:
\[
(1,1,j)\one(\mu,1,1)=(1,1,j)(\mu,1,1)=(1,p_{j\mu},1),
\]
\[
(1,1,j)(1,1,1)(\mu,1,1)=(1,1,1)(\mu,1,1)=(1,1,1).
\]
\end{enumerate}
\end{proof}

One can easily prove the analogs of Lemmas~\ref{l:singular->not_ED},~\ref{l:S-eq_dom->G-eq_dom},~\ref{l:sufficient_conditions} for the semigroup $S^\ast$, since their proofs are close to the corresponding results for $S$. Thus, we obtain the next result for the semigroup $S^\ast$.

\begin{theorem}
\label{th:about_nonsimple_domains}
Suppose a finite simple semigroup $S=(G,\P,\Lambda,I)$ is an e.d. in the language $\LL_S$, then so is $S^\ast$ in the language $\LL_{S^\ast}$.
\end{theorem}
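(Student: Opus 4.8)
The plan is to reduce to the criterion of Theorem~\ref{th:about_M} applied to $S^\ast$: I want to exhibit, for the set $\M_{sem}\subseteq (S^\ast)^4$, a system of equations over $\LL_{S^\ast}$ whose solution set is exactly $\M_{sem}$. Since $S$ is an e.d., Theorem~\ref{th:main} gives that $\P$ is nonsingular and that $G$ is an e.d. in $\LL_G$; these are the only two facts about $S$ I will use, and both survive the passage to $S^\ast$ because $\one$ is adjoined as a new identity without interacting with the structure of $S$. First I record a consequence I will need repeatedly: since $\P$ is nonsingular and $S$ is not a group, both $|\Lambda|>1$ and $|I|>1$. Indeed, if $|I|=1$ then $\P$ is a single row of ones, all of whose columns coincide, so nonsingularity forces $|\Lambda|=1$ and hence $S$ is a group by Corollary~\ref{cor:when_is_group}; the case $|\Lambda|=1$ is symmetric.

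The engine of the proof is the analog of Lemma~\ref{l:sufficient_conditions} for $S^\ast$, namely that for every $n$ and every $P\in (S^\ast)^n$ the set $\T_P((S^\ast)^n,\Gamma)$ is nonempty. Before proving it I would transfer the structural Lemma~\ref{l:term_from_T_must_begin}: every term of $\T((S^\ast)^n,\Gamma)$ begins with a constant $(1,g,k)$ and ends with a constant $(\lambda,h,1)$. The argument is the same as for $S$. Such a term cannot begin with a variable, for substituting an element $(\mu,1,1)$ with $\mu\neq 1$ (available since $|\Lambda|>1$) into that variable produces a value with first index $\mu$, outside $\Gamma$; symmetrically it cannot end with a variable, using an element $(1,1,j)$ with $j\neq 1$. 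Substituting $\one$ only deletes the variable and leaves the surrounding constants in force, so the presence of $\one$ changes nothing. Finally, evaluating at the all-$\one$ point shows the leading constant has first index $1$ and the trailing one second index $1$.

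With this in hand the inductive construction of Lemma~\ref{l:sufficient_conditions} carries over almost verbatim, the only substitution being Lemma~\ref{l:exists_dist_term_nonsimplenew} in place of Lemma~\ref{l:exists_dist_term}. For the base case $|M|=2$ with $P,Q$ differing in, say, the first coordinate, Lemma~\ref{l:exists_dist_term_nonsimplenew} supplies $t(x)\in\T(S^\ast,\Gamma)$ with $t(p_1)\neq t(q_1)$, and the normalized term $t(x_1)t^{-1}(q_1)$ lies in $\T_P(M,\Gamma)$. For the inductive step I would again combine two terms $t(X),s(X)$ isolating $P$ on complementary subsets by the commutator $p(X)=[t(X),s(X)]$, choosing the witnessing values $g_1,g_2\in G$ to be non-commuting; this choice is possible precisely because $G$, being an e.d., has no zero-divisors (Theorem~\ref{th:zero_divisors}). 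The crucial point making all of this insensitive to $\one$ is that every term of $\T((S^\ast)^n,\Gamma)$ takes all its values in $\Gamma\cong G$, so the entire commutator computation takes place inside the group $G$ exactly as in the simple case.

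Finally, granting the analog of Lemma~\ref{l:sufficient_conditions}, for each point $P\notin\M_{sem}$ I choose a term $t_P\in\T_P((S^\ast)^4,\Gamma)$, and the system $\{t_P(x_1,x_2,x_3,x_4)=(1,1,1)\mid P\notin\M_{sem}\}$ has solution set exactly $\M_{sem}$; by Theorem~\ref{th:about_M}, $S^\ast$ is an e.d. The one step requiring genuine care is the transfer of the separating-term lemma to pairs involving $\one$, but this is precisely the content already secured in Lemma~\ref{l:exists_dist_term_nonsimplenew}; everything else is routine bookkeeping, since adjoining an identity neither creates new term-values outside $\Gamma$ nor disturbs the group-theoretic commutator argument.
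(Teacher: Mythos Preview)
Your proposal is correct and follows essentially the same route as the paper: the paper's own proof is simply the one-line remark that the analogs of Lemmas~\ref{l:singular->not_ED},~\ref{l:S-eq_dom->G-eq_dom},~\ref{l:sufficient_conditions} hold for $S^\ast$, with Lemma~\ref{l:exists_dist_term_nonsimplenew} supplying the separating terms; you have unpacked exactly this, in particular the transfer of Lemma~\ref{l:sufficient_conditions} via the same base case / commutator induction, and then concluded through Theorem~\ref{th:about_M} just as in the proof of Theorem~\ref{th:main}. Your observation that nonsingularity of $\P$ together with $S$ not being a group forces both $|\Lambda|>1$ and $|I|>1$ is a helpful explicit check that the paper leaves implicit.
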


\section{Semigroups with nonempty center}
\label{sec:center_semigroups}

\begin{theorem}
\label{th:right_left_ideal_center}
Let $I$ be a left (right) ideal of a semigroup $S$. Element $e\in S$ commutes with all elements of $I$, and there exists $a\in I$ such that $ea\neq a$.  Then the semigroup $S$ is not an e.d. in the language $\LL_S$.  
\end{theorem}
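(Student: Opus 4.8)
The plan is to use the definition of an equational domain directly, rather than the $\M_{sem}$-criterion of Theorem~\ref{th:about_M}: if $S$ were an e.d., then the union of the two algebraic sets $\V_S(x=a)$ and $\V_S(y=a)$ would itself be algebraic, so the set
\[
\M=\{(x,y)\in S^2\mid x=a\ \mbox{or}\ y=a\}
\]
would be the solution set of some system $\Ss(x,y)$. Since $ea\neq a$, the point $(ea,ea)$ does not lie in $\M$, whereas the three points $(a,a)$, $(a,ea)$, $(ea,a)$ do. I would derive a contradiction by showing that \emph{every} equation of $\Ss$ is nevertheless satisfied at $(ea,ea)$, which forces $(ea,ea)\in\V_S(\Ss)=\M$. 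By passing to the opposite semigroup (which interchanges left and right ideals, preserves commuting and the property of being an e.d., and sends $ea\neq a$ to the mirror inequality), it suffices to treat the case where $I$ is a left ideal.

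The heart of the argument is a normal-form computation for the value of a term at these four points. First note that $a\in I$ gives $ea=ae$, and that the left-ideal property makes every word ending in $a$ an element of $I$, hence a word commuting with $e$. Writing a term as $t(x,y)=\c_0 z_1\c_1\cdots z_k\c_k$ with $z_i\in\{x,y\}$, I would substitute $x\mapsto e^{\eps}a$ and $y\mapsto e^{\eta}a$ with $\eps,\eta\in\{0,1\}$, rewrite each inserted factor $e$ to the right of its $a$ via $ea=ae$, and then slide it leftward past the preceding prefix, which ends in $a$, lies in $I$, and therefore commutes with $e$. Carrying all the factors $e$ to the front yields the key identity
\[
t(e^{\eps}a,e^{\eta}a)=e^{\eps k_x+\eta k_y}\,t(a,a),
\]
where $k_x,k_y$ count the occurrences of $x,y$ in $t$. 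Getting this sliding argument completely correct is the main technical obstacle; the delicate point is that the powers of $e$ already accumulated at the front never obstruct the next commutation, precisely because powers of $e$ commute with one another.

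With the identity in hand the remainder is bookkeeping with exponents. Put $w=t(a,a)=s(a,a)$, which are equal because $(a,a)\in\M$. Membership of $(a,ea)$ and $(ea,a)$ in $\M$ then gives $e^{k_y(t)}w=e^{k_y(s)}w$ and $e^{k_x(t)}w=e^{k_x(s)}w$. Multiplying the second identity on the left by $e^{k_y(t)}$ and substituting the first, and using only that powers of $e$ commute among themselves, I would obtain
\[
e^{k_x(t)+k_y(t)}w=e^{k_x(s)+k_y(s)}w,
\]
which is exactly $t(ea,ea)=s(ea,ea)$. Since this holds for every equation of $\Ss$, the excluded point $(ea,ea)$ satisfies $\Ss$, contradicting $\V_S(\Ss)=\M$. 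Hence no such system exists, $\M$ is not algebraic, and $S$ is not an e.d.
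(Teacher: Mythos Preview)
Your proposal is correct and follows essentially the same route as the paper's own proof: both show that the set $\M=\{(x,y)\mid x=a\text{ or }y=a\}$ cannot be algebraic by evaluating an arbitrary equation at the four points $(a,a)$, $(ea,a)$, $(a,ea)$, $(ea,ea)$ and using the left-ideal property to push all copies of $e$ to one end. Your packaging via the single identity $t(e^{\eps}a,e^{\eta}a)=e^{\eps k_x+\eta k_y}\,t(a,a)$ is a bit cleaner than the paper's version (which first collects the $e$'s at the right of each constant-free block and then moves them left past $t'(a,a)$, keeping the trailing constant $c_{n+1}$ separate), but the mechanism and the final exponent bookkeeping are identical.
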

\begin{proof}
Let $I$ be a left ideal (similarly, one can consider the case, where $I$ is a right ideal).

Let us prove that the set $\M=\{(x_1,x_2)|x_1=a\mbox{ or }x_2=a\}$ is not algebraic over $S$. Assume the converse: there exists a system $\Ss(x_1,x_2)$ with the solution set $\M$, and $t(x_1,x_2)=s(x_1,x_2)$ is an equation of $\Ss$ which does not satisfy the point $(ea,ea)$.

Suppose that the equation $t(x_1,x_2)=s(x_1,x_2)$ has the form
\begin{multline*}
c_1w_1(x_1,x_2)c_2w_2(x_1,x_2)\ldots c_nw_n(x_1,x_2)c_{n+1}=\\
d_1u_1(x_1,x_2)d_2u_2(x_1,x_2)\ldots d_mu_m(x_1,x_2)d_{m+1},
\end{multline*}

where $w_i(x_1,x_2)$, $u_i(x_1,x_2)$ are coefficient-free. Denote by $t^\pr(x_1,x_2)$, $s^\pr(x_1,x_2)$ the expressions
\[t^\pr(x_1,x_2)=c_1w_1(x_1,x_2)c_2w_2(x_1,x_2)\ldots c_{n-1}w_{n-1}(x_1,x_2)c_nw_n(x_1,x_2),\]
\[s^\pr(x_1,x_2)=d_1u_1(x_1,x_2)d_2u_2(x_1,x_2)\ldots d_{m-1}u_{m-1}(x_1,x_2)d_mu_m(x_1,x_2).\]

As $(a,a)\in\M$, we have the equality 
\begin{equation}
\label{eq:q}
A=t^\pr(a,a)c_{n+1}=s^\pr(a,a)d_{m+1}.
\end{equation}

Denote by $n_i$ ($m_i$) the number of occurrences of the variable $x_i$ in the term $t^\pr(x_1,x_2)$ ($s^\pr(x_1,x_2)$).

Consider the calculation of the value $t^\pr(ea,a)$.
As the expressions $w_i(ea,a)$ contain the element $a\in I$ and the commuting element $e$, all occurrences of $e$ may be collected in the right part of any expression $w_i(ea,a)=w_i(a,a)e^{k_i}$, where $k_i$ is the number of occurrences of the variable $x_1$ in $w_i(x_1,x_2)$. Thus, the expression $t^\pr(ea,a)$ is written as
\[t^\pr(ea,a)=c_1w_1(a,a)e^{k_1}c_2w_2(a,a)e^{k_2}\ldots c_{n-1}w_{n-1}(a,a)e^{k_{n-1}}c_nw_n(a,a)e^{k_n}.\]
Since $c_nw_n(a,a)\in I$, the element $e$ commutes with $c_nw_n(a,a)$. Hence,
\[t^\pr(ea,a)=c_1w_1(a,a)e^{k_1}c_2w_2(a,a)e^{k_2}\ldots c_{n-1}w_{n-1}(a,a)c_nw_n(a,a)e^{k_{n-1}}e^{k_n}.\]
Similarly, all occurrences of $e$ maybe collected in the end of the expression
\begin{multline*}
t^\pr(ea,a)=c_1w_1(a,a)c_2w_2(a,a)\ldots c_{n-1}w_{n-1}(a,a)c_nw_n(a,a)e^{k_1}e^{k_2}\ldots e^{k_{n-1}}e^{k_n}=\\
c_1w_1(a,a)c_2w_2(a,a)\ldots c_{n-1}w_{n-1}(a,a)c_nw_n(a,a)e^{n_1}=t^\pr(a,a)e^{n_1}.
\end{multline*}

As the element $t^\pr(a,a)$ belongs to the ideal $I$, the occurrences of $e$ can be rewritten in the origin of the expression
\[t(ea,a)=e^{n_1}t^\pr(a,a)c_{n+1}.\]

Similarly, one can obtain the equality 
\[s(ea,a)=e^{m_1}s^\pr(a,a)c_{m+1}.\]

Since $(ea,a)\in\M$, we have the equality
\begin{equation}
\label{eq:qq}
e^{n_1}t^\pr(ea,a)c_{n+1}=e^{m_1}s^\pr(a,a)d_{m+1}.
\end{equation}

Analogically, one can prove that the points $(a,ea)\in\M$, $(ea,ea)\notin\M$ imply respectively the equality 
\begin{equation}
\label{eq:qqq}
e^{n_2}t^\pr(a,a)c_{n+1}=e^{m_2}s^\pr(a,a)d_{m+1}
\end{equation}
and the inequality
\begin{equation}
\label{eq:qqqq}
e^{n_1+n_2}t^\pr(a,a)c_{n+1}\neq e^{m_1+m_2}s^\pr(a,a)d_{m+1}.
\end{equation}

Using~(\ref{eq:q},\ref{eq:qq},\ref{eq:qqq}), we have
\begin{multline*}
e^{n_1+n_2}t^\pr(a,a)c_{n+1}=e^{n_2}(e^{n_1}A)=e^{n_2}(e^{m_1}A)=e^{m_1}(e^{n_2}A)=
e^{m_1}e^{m_2}A=\\
e^{m_1+m_2}s^\pr(a,a)d_{m+1},
\end{multline*}
that contradicts with~(\ref{eq:qqqq}).
\end{proof}

The following result follows easily from Theorem~\ref{th:right_left_ideal_center} if put $I=S$.

\begin{theorem}
\label{th:center_semigroups}
Any semigroup $S$ with a central element $e$ is not an e.d. in the language $\LL_S$ if there exists $a\in S$ such that $ae\neq a$.
\end{theorem}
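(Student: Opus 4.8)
The plan is to derive Theorem~\ref{th:center_semigroups} as an immediate specialization of Theorem~\ref{th:right_left_ideal_center}, which has already been established. The key observation is that the whole semigroup $S$ is itself both a left ideal and a right ideal of $S$: for any $s\in S$ and $a\in S$ we trivially have $sa\in S$ and $as\in S$. Thus setting $I=S$ in Theorem~\ref{th:right_left_ideal_center} is legitimate, and the hypotheses of that theorem specialize cleanly.

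First I would verify that the hypotheses of Theorem~\ref{th:right_left_ideal_center} hold under the assumptions of the present statement. We are given a central element $e\in S$, which by the definition of the center means $e$ commutes with every element of $S$; in particular $e$ commutes with all elements of the ideal $I=S$, which is exactly the first hypothesis of Theorem~\ref{th:right_left_ideal_center}. We are also given an element $a\in S$ with $ae\neq a$. Since $e$ is central, $ae=ea$, so $ea\neq a$ as well, which supplies the element $a\in I=S$ with $ea\neq a$ demanded by Theorem~\ref{th:right_left_ideal_center}. (It is worth noting the slight notational discrepancy: the earlier theorem is phrased for a left ideal with the condition $ea\neq a$, while the present statement writes $ae\neq a$; centrality of $e$ makes these conditions identical, so either formulation of the ideal direction applies.)

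With both hypotheses verified for $I=S$, Theorem~\ref{th:right_left_ideal_center} directly yields that $S$ is not an e.d. in the language $\LL_S$, which is the desired conclusion. The proof is therefore a one-line invocation once the hypothesis matching is made explicit.

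I do not anticipate any genuine obstacle here, since all the real work — constructing the equation that fails on the point $(ea,ea)$ and collecting the central factors through the ideal — was carried out in Theorem~\ref{th:right_left_ideal_center}. The only point requiring minor care is the translation between $ae\neq a$ and $ea\neq a$, which is resolved immediately by centrality; no separate argument for the right-ideal case is even needed, since $S$ serves simultaneously as a left and right ideal and centrality symmetrizes the condition.
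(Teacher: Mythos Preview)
Your proposal is correct and matches the paper's own proof exactly: the paper simply states that the result follows from Theorem~\ref{th:right_left_ideal_center} by putting $I=S$. Your additional remark reconciling $ae\neq a$ with $ea\neq a$ via centrality is a helpful clarification that the paper leaves implicit.
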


\begin{corollary}
\label{cor:center_semigroups}
The following holds:
\begin{enumerate}
\item any nontrivial semigroup $S$ with zero is not an e.d. in the language $\LL_S$;
\item any nontrivial commutative semigroup $S$ is not an e.d. in the language $\LL_S$;
\item if a homogroup $S$ is an e.d. then $S$ is a group (i.d. $S=Ker(S)$).
\end{enumerate}
\end{corollary}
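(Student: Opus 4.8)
The plan is to derive all three parts from Theorem~\ref{th:center_semigroups}, which guarantees that a semigroup fails to be an e.d. as soon as it possesses a central element $e$ together with an element $a$ satisfying $ae\neq a$. In each case I therefore only have to exhibit such a pair $(e,a)$; the application of Theorem~\ref{th:center_semigroups} then finishes the argument.

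For the first part I would take $e=0$, the zero of $S$. The zero is central (as noted in Section~2, the identity element and zero always lie in the center), so it remains to produce $a$ with $a0\neq a$. Since $S$ is nontrivial there is an element $a\neq 0$, and then $a0=0\neq a$, which is exactly the hypothesis needed.

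For the second part, every element of a commutative $S$ is central, so any element may play the role of $e$, and the only thing to rule out is the degenerate situation in which $ae=a$ holds for \emph{all} $a,e\in S$. But in that case, for arbitrary $u,v\in S$ one has $uv=u$, while commutativity gives $uv=vu=v$, forcing $u=v$; hence $S$ would be trivial. As $S$ is assumed nontrivial, some pair $(e,a)$ with $ae\neq a$ must exist.

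For the third part I argue from the hypothesis that the homogroup $S$ is an e.d. Let $e$ be the identity of $Ker(S)$; by Theorem~\ref{th:homogroups_properties} it is central. Because $S$ is an e.d., Theorem~\ref{th:center_semigroups} forbids any $a$ with $ae\neq a$, so $ae=a$ for every $a\in S$; centrality then yields $ea=ae=a$, making $e$ a two-sided identity of $S$. Finally, since $Ker(S)$ is a two-sided ideal containing $e$, every $s\in S$ satisfies $s=se\in Ker(S)$, whence $S=Ker(S)$ is a group. The step I expect to be the crux is this last one: converting the ``no witness $a$'' conclusion of Theorem~\ref{th:center_semigroups} into the assertion that $e$ is a global identity, and then invoking the ideal property of the kernel to absorb all of $S$ into $Ker(S)$.
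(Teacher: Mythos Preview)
Your proposal is correct and matches the paper's approach: all three parts are reduced to Theorem~\ref{th:center_semigroups} via the same central element (the zero, an arbitrary element, and the identity of $Ker(S)$ respectively). For part~(3) the paper phrases the argument as the contrapositive---pick $a\in S\setminus Ker(S)$ and observe $ae\in Ker(S)$ so $ae\neq a$---while you argue the direct implication and pass through ``$e$ is a global identity''; the key ingredient (the ideal property of $Ker(S)$) is identical.
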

\begin{proof}
The proofs of the first two statements trivially follow from Theorem~\ref{th:center_semigroups}. Let us prove the last one.

Let $e$ be the identity element of the kernel $Ker(S)$ of a homogroup $S$. By Theorem~\ref{th:homogroups_properties}, the element $e$ belongs to  the center of $S$. If we assume the existence of an element $a\in S\setminus Ker(S)$, then $ae\in Ker(S)$, and hence $ae\neq a$. By Theorem~\ref{th:center_semigroups}, $S$ is not an e.d. in the language $\LL_S$.
\end{proof}

The information of the author:

Artem N. Shevlyakov

Omsk Branch of Institute of Mathematics, Siberian Branch of the Russian Academy of Sciences

644099 Russia, Omsk, Pevtsova st. 13

Phone: +7-3812-23-25-51.

e-mail: \texttt{a\_shevl@mail.ru}

\begin{thebibliography}{20}

\bibitem{skornyakov} V. A. Artamonov, V. N. Saliy, L. A. Skornyakov, et al., General Algebra, Vol. 2 Nauka, Moscow, 1991 [in Russian].

\bibitem{AG_over_groupsI}
G. Baumslag, A. Miasnikov, V. N. Remeslennikov, Algebraic geometry over groups, Trends in Math., Int. Conf. Algorithmic problems in groups and semigroups (Lincoln, NE, May 11–16, 1998), Birkhahuser Boston, Boston, MA, 2000, 35–50.


\bibitem{uniTh_I}
E. Daniyarova, A. Miasnikov, V. Remeslennikov, Unification theorems in algebraic geometry, Algebra and Discrete Mathematics, 1, 2008, 80–-111.

\bibitem{uniTh_II}
E. Yu. Daniyarova, A. G. Myasnikov, V. N. Remeslennikov, Algebraic geometry over algebraic structures. II. Foundations, Fundam. Prikl. Mat., 17:1, 2012, 65–-106 

\bibitem{uniTh_IV}
E.~Daniyarova, A.~Miasnikov, V.~Remeslennikov, Algebraic geometry over algebraic structures IV: equational domains and co-domains, Algebra \& Logic,49:6, 715--756.

\bibitem{lyapin} E. S. Lyapin, Semigroups, Fizmatgiz, Moscow, 1960; English transl., Amer. Math. Soc, Providence, R.I., 1963.


\bibitem{rhodes}
Maurer~W.D., Rhodes~J.~L., A property of finite simple non-abelian
groups, Proc. Amer. Math. Soc., 16, 1965, 552--554


\bibitem{AG_over_groupsII}
A. Myasnikov, V. Remeslennikov, Algebraic geometry over groups II: Logical foundations, J. of Algebra, 234, 2000, 225-–276.


\bibitem{makanin}
G. S. Makanin, Decidability of the universal and positive theories of a free group, Izv. Akad. Nauk SSSR Ser. Mat., 48:4, 1984, 735-–749 

\end{thebibliography}
\end{document}